\newtheorem{thm}{Theorem}[section]
\newtheorem{cor}[thm]{Corollary}
\newtheorem{lem}[thm]{Lemma}
\newtheorem{prop}[thm]{Proposition}
\theoremstyle{definition}
\newtheorem{defn}[thm]{Definition}
\theoremstyle{remark}
\newtheorem{exm}[thm]{Example}
\numberwithin{equation}{section}
\newcommand{\irr}{{\rm Irr}}
\newcommand{\cl}{{\rm Cl}}
\newcommand{\bl}{{\rm Bl}}
\begin{document}

\title[Block Form of Frobenius Groups]
 {Block Form of Frobenius Groups}

\author{Jiwen Zeng*}
\address{School of Mathematic Science, Xiamen University, Xiamen, Fujian, 361005, China}
\author{Jiping Zhang}
\address{School of Mathematic Science, Peking University, Beijing, 100871, China}
\email{jwzeng@xmu.edu.cn}
\thanks{* Corresponding author: Jiwen Zeng, jwzeng@xmu.edu.cn}

\subjclass{MSC 2020: 20C20; Secondary 20C15}

\keywords{Frobenius group, block algebra, characters}

\date{}

\dedicatory{}


\begin{abstract}
  The aim of this paper is  to apply character properties of Frobenius group to a local block form of an group algebra. We start by establishing a block form of Brauer permutation Lemma by using block participation of conjugate classes of a group $G$. Then we can define a pair of Frobenius corresponding blocks between a group $G$ and its normal subgroup $N$. A near group condition is given to determine a pair of Frobenius corresponding blocks. With a pair of Frobenius corresponding blocks, we study its group structure. At last we prove connections between nilpotent properties and Frobenius corresponding blocks.
\end{abstract}

\maketitle

\section*{Introduction}
Frobenius groups are well-known in the area of finite group study. There are at least five equivalent conditions to define Frobenius groups. Hence we can find a lot of generalization of Frobenius groups. If you generalize Frobenius group in one way, you could not find the other ways to define the same generalized Frobenius group. This is one of interests to study Frobenius groups. Our interest is the generalization of Frobenius groups in character theory. Related paper refer to \cite{ekr}\cite{fdz}\cite{cz}\cite{lew}.

Suppose $N$ is a normal subgroup of a group $G$. Let Irr$(N)$ denote the set of all irreducible ordinary characters of $N$. If any $\varphi \in \textrm{Irr}(N)$ is induced irreducibly to $\varphi^{G} \in \textrm{Irr}(G)$, we say $G$ is Frobenius group and $N$ is the kernel. When you consider an block algebra $b$ of $N$, naturally we would like to consider local character property: is any $\varphi \in \textrm{Irr}(b)$ induced irreducibly to $G$?

As usual, a property  held for a block algebra is said to be a local property for group algebras. If any character $\varphi$ from a block algebra $b$ of normal subgroup $N$ of $G$ is induced  irreducibly to $G$, we say that the property is local Frobenius property. According to the properties of Brauer correspondence, the induced irreducible character $\varphi^{G}$ belongs to a block $B=b^{G}$, which is called the Brauer correspondence of $b$. Or we say $b^{G}$ is defined. Hence it is easily to define  $(b, b^{G})$ as a pair of Frobenius corresponding blocks.

If $G$ is a Frobenius group with kernel $N$, any pair $(b, b^{G})$ will be  a pair of Frobenius corresponding blocks for a block $b$ of $N$. If there exists a pair of Frobenius corresponding blocks $(b,b^{G})$ for a group $G$ and its normal subgroup $N$, how much can we say the group $G$ and its normal subgroup $N$? In this paper, we are going to study the problem in  three respects.

First, we extend Brauer permutation Lemma to a local block form by using block participation of conjugate classes\cite{br}. Then we can give a near group condition to decide a pair of Frobenius corresponding blocks. Second, we will study the structure of the group if it has a pair of Frobenius corresponding blocks. Some results  of lower defect groups\cite{ols} and Brauer correspondence \cite{bra}\cite{broue2} will be used for the purpose of studying these groups with a pair of Frobenius corresponding blocks.  At last, we also obtain some results about nilpotent properties of Frobenius corresponding blocks. These results partly answer some questions in paper\cite{bz1}\cite{bz2}.

In section 5, We have examples to show that there exists non-Frobenius finite group with a pair of Frobenius corresponding blocks. Hence we have a good reason to define Frobenius corresponding blocks and study them.

In this paper, most notations and basic definitions can refer to ~\cite{naga}.
Let $G$ be a finite group. Let Irr($G$) and IBr($G$) denote the set of irreducible ordinary characters and irreducible Brauer characters of $G$, respectively.

The following notation and terminology will be used throughout in this paper. $G$ is a finite group. $p>0$ is a prime number. $(K, R, F)$ is a splitting $p$-module system, that means $R$ is a complete discrete valuation ring with a maximal ideal $\pi$ such that $F=R/\pi$ is a field of characteristic $p$ and $K$ is the quotient  field of $R$. A $p$-regular element means an element of $G$ whose order
is prime to $p$. $G^{0}$ is the set of all $p$-regular elements in $G$. Bl($G$) denotes the set of $p$-blocks of $G$. $B_{0}$ denotes the principal block of $G$, containing the trivial character. We use LBr($G$) to denote the set of all linear Brauer characters of $G$.

Our paper is mostly self-contained. Although some basic results are well-known and could be found in literatures, we like to bring  convenience to  readers for the proof. Our methods of proofs in this paper most come from modular representation of group theory.

We have six sections. Basic results and notations appear in the first two sections and they could be found in different literatures.  Section 1 is Brauer permutation Lemma and its proof. Section 2 is block partition of conjugate classes of group $G$. In section 3 we will give block form of Brauer permutation Lemma which localizes the global Brauer permutation Lemma for our use in the following sections. Lemma 3.1 is our main result in Section 3. In section 4, we will define a pair of Frobenius corresponding blocks and our main result is Theorem 4.6, which proves that near group condition can decide a pair of Frobenius corresponding blocks. Section 5 will give our main results: Theorem 5.4 and 5.5 in this paper. Here we discuss the structure of finite groups with a pair of Frobenius corresponding blocks.  In the last section, we study the connection between nilpotent properties and Frobenius corresponding blocks.
\smallskip

\section{Brauer Permutation Lemma}

In this section we state the well-known Brauer permutation Lemma and its application to character theory. We start with a few definitions and introduce notation and terminology that is consistent throughout this paper.

Given a set $\Omega=\{1,2,...,n\}, S_{\Omega}$ is the permutation group of $\Omega.$
Let a group $G$ act on the set $\Omega=\{1,2,...,n\}$ by $\rho: x\rightarrow \rho(x) \in S_{\Omega}$. we denote by:
$$\rho(x)=\left(
  \begin{array}{cccc}
    1 & 2 & \cdots & n \\
    1^{x} & 2^{x} & \cdots & n^{x} \\
  \end{array}
\right)$$
This defines a permutation matrix by:
$$a^{x}_{ij}=\left\{\begin{array}{cc}
                 1 & j=i^{x}\\
                 0 & {\rm otherwise}\\
              \end{array}\right\}$$

  The following Brauer permutation Lemma should appear in many literatures. Here for the reader's convenience, we give a proof by using modular representation of group theory.
\begin{lem}(Brauer Permutation Lemma)
Suppose $\Omega=\{1,2,...,n\},$ and $G$ acts on $\Omega$ by $\rho$ and $\sigma$, respectively. If there exists a matrix
 $A=(a_{ij})\in {\rm GL}(K)$ such that $\rho(x)$ permutes the rows of $A$ and $\sigma(x)$ permutes the columns of $A$ satisfying
 $$a_{i^{\rho(x)}j^{\sigma(x)}}=a_{ij}$$
 for all $x,i,j$, then the following holds:
 \begin{itemize}
   \item for any $x \in G,$ the number of rows fixed by  $\rho(x)$ is equal to that of columns fixed by $\sigma(x)$
   \item the number of  $G$-orbits on the set of the rows of $A$ equals to that of $G$-orbits on the set of the columns of $A$
 \end{itemize}
\end{lem}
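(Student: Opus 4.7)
The plan is to reduce both statements to a single trace identity derived from the hypothesis on $A$. Write $P(x)$ and $Q(x)$ for the permutation matrices of $\rho(x)$ and $\sigma(x)$, defined as in the paragraph preceding the statement. A direct computation from the definitions gives $(P(x) A)_{ij} = a_{i^{\rho(x)},\, j}$ and $(A Q(x))_{ij} = a_{i,\, j^{\sigma(x^{-1})}}$. After the substitution $j \mapsto j^{\sigma(x)}$, the hypothesis $a_{i^{\rho(x)} j^{\sigma(x)}} = a_{ij}$ becomes exactly the matrix identity $P(x) A = A Q(x)$.

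Since $A \in \operatorname{GL}(K)$ is invertible, this yields $Q(x) = A^{-1} P(x) A$, hence $\operatorname{tr} P(x) = \operatorname{tr} Q(x)$ for every $x \in G$. For any permutation matrix the trace equals the number of fixed points of the underlying permutation, so $\operatorname{tr} P(x)$ counts the indices $i \in \Omega$ with $i^{\rho(x)} = i$, that is, the rows of $A$ fixed by $\rho(x)$, while $\operatorname{tr} Q(x)$ counts the columns of $A$ fixed by $\sigma(x)$. This is the first bullet.

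For the second bullet I would apply the Cauchy--Frobenius (Burnside) counting lemma to each of the two $G$-actions on $\Omega$: it expresses the number of $G$-orbits under $\rho$ as $\frac{1}{|G|} \sum_{x \in G} \operatorname{tr} P(x)$ and the number of $G$-orbits under $\sigma$ as $\frac{1}{|G|} \sum_{x \in G} \operatorname{tr} Q(x)$. By the trace identity just established the two sums coincide, giving the desired equality of orbit counts.

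The only mildly subtle step is the first one: verifying that the asymmetric invariance $a_{i^{\rho(x)} j^{\sigma(x)}} = a_{ij}$ (the same $x$ acting on rows via $\rho$ and on columns via $\sigma$) matches the intertwining relation $P(x) A = A Q(x)$, rather than some variant such as $P(x) A Q(x) = A$. Once this index bookkeeping is pinned down, the remainder is conjugation of matrices followed by a standard orbit-counting argument, and I do not foresee any real obstacle. Note also that, despite the author's framing, no genuine modular representation theory enters; the argument is pure linear algebra over $K$.
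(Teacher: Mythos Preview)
Your argument is correct and at its core coincides with the paper's: both proofs reduce the hypothesis $a_{i^{\rho(x)}j^{\sigma(x)}}=a_{ij}$ to the intertwining relation $\rho(x)A=A\sigma(x)$ (your $P(x)A=AQ(x)$), and your index bookkeeping matches the paper's computation $\rho(x^{-1})A\sigma(x)=A$ exactly. The only difference is packaging: the paper interprets this relation as saying that $A$ realises a $KG$-module isomorphism between the two permutation modules $K\Omega$ and then leaves the passage to fixed points and orbits implicit, whereas you draw the conclusions explicitly via $\operatorname{tr}P(x)=\operatorname{tr}Q(x)$ and the Cauchy--Frobenius lemma. These are two phrasings of the same fact (isomorphic representations have equal characters, and the permutation character counts fixed points), so the approaches are essentially identical; your version is just more self-contained and, as you note, makes clear that nothing genuinely modular is used.
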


\begin{proof}
Consider $K\Omega$ as $G-$permutation module by actions $\rho(x)$ and $\sigma(x)$, respectively. It suffices to prove they are isomorphic as $G-$module. Take a map
$$f:   K\Omega \longrightarrow K\Omega           $$
by matrix $A$. Note $\Omega$ is a basis of $K\Omega$. Then $f$ is a one to one map. To check $G$-homomorphism, we need to prove
$$f(i^{\rho(x)})=f(i)^{\sigma(x)}$$
Using the corresponding matrix to express:
$$A\sigma(x)=\rho(x)A$$
This is equivalent to  the following:
$$\begin{array}{c}
   \rho(x^{-1}) A\sigma(x)=A \\
   \rho(x^{-1})=(\rho^{x^{-1}}_{ij}),\sigma(x)=(\sigma^{x}_{ij}) \\
  \end{array}
$$
Note that
$$\rho^{x^{-1}}_{ij}=1\Leftrightarrow j=i^{\rho(x^{-1})}\Leftrightarrow i=j^{\rho(x)},\rho^{x^{-1}}_{j^{\rho(x)}j}=1$$
It means $\rho(x^{-1})=(\rho^{x^{-1}}_{ij})$ acting on the rows on the identity matrix by:
$$i\rightarrow i^{\rho(x)},i=1,2,...,n$$
Hence $\rho(x^{-1})A=(a_{i^{\rho(x)}j})$.
Similarly, note
$$\sigma^{x}_{ij}=1\Leftrightarrow j=i^{\sigma(x)},\sigma^{x}_{ii^{\sigma(x)}}=1$$
It means $\sigma(x)=(\sigma^{x}_{ij})$ acting on the columns on the identity matrix by:
$$j\rightarrow j^{\sigma(x)},j=1,2,...,n$$
Hence
 $\rho(x^{-1})A\sigma(x)=(a_{i^{\rho(x)}j^{\sigma(x)}})$ and we have $\rho(x^{-1})A\sigma(x)=A$
\end{proof}

Let us denote the set of conjugate classes of $G$ by ${\rm Cl}(G$). If a group $A$ acts on the group $G$ by automorphism, the group $A$ naturally acts on the set ${\rm Irr}(G)$  by $\chi^{a}(x)=\chi(x^{a^{-1}}).$
The following result is the well-known application of Brauer permutation Lemma to character theory.

\begin{cor}Let a group $A$ acts the set $\irr(G)=\{\chi_{1},\chi_{2},...,\chi_{n}\} $ and the set ${\rm Cl}(G)=C_{1},C_{2},...,C_{n}$ satisfying:
$$\chi_{i}^{a}(x_{j}^{a})=\chi_{i}(x_{j}),x_{j}\in C_{j},a\in A,i,j=1,2,...,n,x_{j}^{a}\in C_{j}^{a}$$
Then the following holds:
\begin{itemize}
  \item for any $a\in A$, the number of fixed irreducible characters in ${\rm Irr}(G)$ equals that of fixed classes in ${\rm Cl}(G)$.
  \item the number of $A$-orbits in ${\rm Irr}(G)$ equals that of $A$-orbits in ${\rm Cl}(G)$.
\end{itemize}
\end{cor}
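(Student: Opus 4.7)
The plan is to apply Lemma 1.1 directly, taking the invertible matrix there to be the character table of $G$.

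First, I would set $\Omega = \{1, 2, \ldots, n\}$ and form the square matrix $X = (x_{ij})$ with $x_{ij} = \chi_{i}(y_{j})$, where $y_{j} \in C_{j}$ is a fixed representative of the $j$-th conjugacy class. The first orthogonality relations (equivalently, the linear independence of irreducible characters as class functions) show that $X$ is invertible over $K$, so $X$ is eligible to play the role of the matrix $A$ in Lemma 1.1.

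Second, I would let $\rho(a)$ be the permutation of $\Omega$ induced by $\chi_{i} \mapsto \chi_{i}^{a}$ and $\sigma(a)$ the permutation induced by $C_{j} \mapsto C_{j}^{a}$. The stated hypothesis $\chi_{i}^{a}(x_{j}^{a}) = \chi_{i}(x_{j})$ then reads $x_{i^{\rho(a)}\, j^{\sigma(a)}} = x_{ij}$, which is exactly the compatibility condition required by Lemma 1.1. One small point worth checking is that because $\chi_{i}^{a}$ is a class function and $x_{j}^{a} \in C_{j}^{a}$, the identity does not depend on the particular representative used for the class $C_{j}^{a}$, so the column action is well defined on the index $j$.

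Finally, invoking Lemma 1.1 delivers both conclusions simultaneously: the rows of $X$ fixed by $\rho(a)$ are precisely those $i$ with $\chi_{i}^{a} = \chi_{i}$, the columns fixed by $\sigma(a)$ are precisely those $j$ with $C_{j}^{a} = C_{j}$, and the row and column $A$-orbits correspond bijectively to $A$-orbits on $\irr(G)$ and $\cl(G)$ respectively. The main obstacle here is purely notational bookkeeping, namely aligning the hypothesis $\chi_{i}^{a}(x_{j}^{a}) = \chi_{i}(x_{j})$ with the matrix-entry compatibility $a_{i^{\rho(x)} j^{\sigma(x)}} = a_{ij}$ of Lemma 1.1; once this identification is made, both bullet points of the corollary drop out immediately.
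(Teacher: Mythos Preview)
Your proposal is correct and follows essentially the same approach as the paper: form the nonsingular character table $X=(\chi_i(x_j))$, let $A$ act on its rows via $\irr(G)$ and on its columns via $\cl(G)$, verify the compatibility condition, and apply Lemma~1.1. The paper's own proof is a two-line version of exactly this argument.
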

\begin{proof}
Note matrix $X=(\chi_{i}(x_{j}))$ is nonsingular. Consider the situations when $A$ acts on the rows and columns of the matrix $X$. Then the conditions of Brauer permutation Lemma are satisfied.
\end{proof}

\section{ Block Partition of conjugate classes of group $G$}
In this section we state Brauer's results\cite{br} of block partition of conjugate classes of a group $G$. Some related results refer to \cite{ii,broue,ols}.
Although their proof can be found in literature, we give a proof for the reader's convenience.
\begin{thm} (Brauer)Let $V$ be a free $R$-module with a basis $\Omega=\{v_{1},v_{2},...,v_{n}\}$. Given a direct sum decomposition:
\begin{equation}\label{ol}
  V=W_{1}\bigoplus W_{2}\bigoplus \cdots\bigoplus W_{r}
\end{equation}
and a Projection $\Pi_{i}$ of $V$ on $W_{i},i=1,2,...,r$, there exists a partition:
\begin{equation}\label{ols}
  \Omega=\bigcup_{i=1,...,r}\Omega_{i}, \Omega_{i}\bigcap \Omega_{j}=\emptyset,i\neq j.
\end{equation}
such that $\Pi_{i}(\Omega_{i})$ is a basis of $W_{i},i=1,2,...,r.$
\end{thm}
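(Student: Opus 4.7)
The plan is to reduce the construction of the partition to a determinantal identity and then to exploit the local structure of $R$. Since $R$ is a (complete discrete) local ring and each $W_i$ is a direct summand of the finitely generated free module $V$, each $W_i$ is itself free; set $n_i=\mathrm{rank}_R W_i$, so that $n_1+\cdots+n_r=n$. Picking a basis $\{w_{i,1},\dots,w_{i,n_i}\}$ of each $W_i$ and concatenating gives a second basis of $V$. Writing
$$v_j=\sum_{i,k}a_{j,(i,k)}\,w_{i,k},\qquad \Pi_i(v_j)=\sum_{k}a_{j,(i,k)}\,w_{i,k},$$
the change-of-basis matrix $A=(a_{j,(i,k)})$, with rows indexed by $j$ and columns by the pairs $(i,k)$, is invertible over $R$, so $\det A\in R^{\times}$.

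Next I would apply the generalized (block) Laplace expansion of $\det A$ with columns grouped into the blocks $C_i=\{(i,1),\dots,(i,n_i)\}$. Iterating the standard Laplace rule one block at a time yields
$$\det A\;=\;\sum_{\phi}\epsilon(\phi)\,\prod_{i=1}^{r}\det A[\,\phi^{-1}(i),\,C_i\,],$$
where $\phi$ ranges over all maps $\{1,\dots,n\}\to\{1,\dots,r\}$ with $|\phi^{-1}(i)|=n_i$ for each $i$ (equivalently, over ordered partitions of the row index set with parts of the prescribed sizes), $\epsilon(\phi)\in\{\pm 1\}$ is the usual sign, and $A[S,T]$ denotes the submatrix on row set $S$ and column set $T$.

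The key step is then a local-ring observation: since $R$ is local with maximal ideal $\pi$ and $\det A$ is a unit, not every summand can lie in $\pi$, so at least one summand is a unit. For such a $\phi^{*}$, the product $\prod_i\det A[\,\phi^{*-1}(i),\,C_i\,]$ is a unit in $R$, forcing each factor, hence each diagonal block $A[\,\phi^{*-1}(i),\,C_i\,]$, to lie in $\mathrm{GL}_{n_i}(R)$. Defining $\Omega_i=\{v_j:\phi^{*}(j)=i\}$ gives a disjoint partition of $\Omega$, and the rows of this invertible submatrix are precisely the coordinate vectors of the set $\{\Pi_i(v_j):v_j\in\Omega_i\}$ in the basis $\{w_{i,k}\}$, so $\Pi_i(\Omega_i)$ is a basis of $W_i$, as required.

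The main obstacle is not a hard calculation but the recognition of two ingredients: the block Laplace expansion, which may require some care with the sign $\epsilon(\phi)$, and the local-ring principle that any finite $R$-sum equal to a unit must contain a unit summand. The latter is what allows the conclusion to survive over $R$ rather than only over the fraction field $K$, and it is the place where the hypothesis that $R$ is local is genuinely used.
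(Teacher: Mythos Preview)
Your proof is correct and takes a genuinely different route from the paper's. The paper argues iteratively: since the first $n\times n_1$ block of columns of $A$ has rank $n_1$, one permutes rows to make the top-left $n_1\times n_1$ principal minor nonsingular, then passes to the lower-right $(n-n_1)\times(n-n_1)$ corner and repeats for $W_2,\dots,W_r$. You instead expand $\det A$ in one stroke via the generalized Laplace expansion along the column blocks $C_1,\dots,C_r$, and then invoke the local-ring principle that a unit in $R$ cannot be a finite sum of non-units to single out one partition $\phi^{*}$ all of whose diagonal blocks lie in $\mathrm{GL}_{n_i}(R)$.

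What your approach buys is that the role of the hypothesis ``$R$ is local'' becomes completely transparent, and you obtain $R$-bases of the $W_i$ directly, with no detour through the fraction field $K$ and no induction. The paper's greedy argument is more elementary in that it avoids the block Laplace identity, but as written it is a little loose at the inductive step: merely choosing \emph{some} nonsingular top-left $n_1\times n_1$ block does not by itself force the literal lower-right corner to have full rank in the next column block (already for $A=\left(\begin{smallmatrix}1&1\\1&0\end{smallmatrix}\right)$ with $n_1=n_2=1$ the naive choice fails), so one implicitly needs a Schur-complement or row-reduction step there. Your Laplace-plus-locality argument sidesteps that issue entirely; the only point to tidy up, as you note yourself, is the bookkeeping for the sign $\epsilon(\phi)$, which is harmless since $\pm1\in R^{\times}$.
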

\begin{proof}
Let $\{w^{j}_{1},w^{j}_{2},...,w^{j}_{n_{j}}\}$ be a basis of $W_{j},j=1,2,...,r.$ Write
\begin{equation*}
  v_{i}=\sum_{j,k}\alpha_{i,(j,k)}w_{k}^{j},i=1,2,...,n.
\end{equation*}
Then $$\Pi_{j}(v_{i})=\sum_{k=1,...,n_{j}}\alpha_{i,(j,k)}w_{k}^{j}
$$
Let $A=(\alpha_{i,(j,k)})$ be the $n\times n$ matrix with $i$ as the row index and $(j,k)$ as the columns in the order of basis of $W_{1},W_{2},...,W_{r}.$

Since the first $n\times n_{1}$ sub-matrix of $A$ has a rank $n_{1}$, we can rearrange the rows of $A$ to get a new matrix $A_{1}$, which has first principal  $n_{1}\times n_{1}$ minor as nonsingular. Then consider lower right corner $(n-n_{1})\times (n-n_{1})$ sub-matrix $B$ of $A_{1}$. Proceeding  as above, we can get second principal $n_{2}\times n_{2}$  minor as nonsingular. Continue the process above, we get a matrix which has principal minors:
$$n_{1}\times n_{1},n_{2}\times n_{2},...,n_{r}\times n_{r} $$
all of which are nonsingular. Corresponding the new matrix, we have a partition of
$$\Omega=\bigcup_{i=1,2,...,r}\Omega_{i}, \Omega_{i}\bigcap\Omega_{j}=\emptyset,i\neq j$$
with $\Pi_{j}(\Omega_{j})$ is a basis of $W_{j},j=1,2,...,r.$
\end{proof}

The partition of $\Omega$ as in the Theorem above is said to be a partition associated with the decomposition of $V=\bigoplus_{i=1,...,r} W_{i}.$

\begin{cor}
With the same notations and conditions above, given a partition as Theorem 2.1, if $U$ is a $R$-submodule of $V$ with a basis $\Lambda$, satisfying:
\begin{itemize}
  \item $\Pi_{j}(U)\subseteq U,j=1,2,...,r$
  \item $\Lambda \subseteq \Omega$
\end{itemize}
Then $\Lambda=\bigcup_{i=1,...,r}(\Lambda\bigcap\Omega_{i})$ is a partition associated with the decomposition $U=\bigoplus_{i}\Pi_{i}(U)$. In particular,
${\rm Rank}_{R}\Pi_{i}(U)=|\Lambda\bigcap\Omega_{i}|$
\end{cor}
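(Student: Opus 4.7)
The plan is to proceed in three steps: first show that $U$ inherits the direct sum $U = \bigoplus_i \Pi_i(U)$ from $V = \bigoplus_i W_i$; next obtain the rank identity ${\rm Rank}_R \Pi_i(U) = |\Lambda \cap \Omega_i|$ by a linear-independence plus counting argument; and finally upgrade this to the statement that $\Pi_i(\Lambda \cap \Omega_i)$ is actually an $R$-basis of $\Pi_i(U)$. The first step is essentially formal: for $u \in U$ the unique decomposition $u = \sum_i \Pi_i(u)$ in $V$ already has each summand inside $U$ by the invariance hypothesis $\Pi_j(U) \subseteq U$, and the sum is direct because the ambient sum is.

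For the rank identity, the key input is that $\Pi_i(\Omega_i)$ is an $R$-basis of $W_i$ by Theorem 2.1, so $\Pi_i(\Lambda \cap \Omega_i)$ is automatically linearly independent of cardinality $|\Lambda \cap \Omega_i|$; since $\Lambda \cap \Omega_i \subseteq U$, these vectors lie in $\Pi_i(U)$, giving ${\rm Rank}_R \Pi_i(U) \geq |\Lambda \cap \Omega_i|$. Summing over $i$ and using that $\Lambda = \bigcup_i (\Lambda \cap \Omega_i)$ is a disjoint union yields
\[
|\Lambda| = {\rm Rank}_R U = \sum_i {\rm Rank}_R \Pi_i(U) \geq \sum_i |\Lambda \cap \Omega_i| = |\Lambda|,
\]
forcing equality everywhere, so ${\rm Rank}_R \Pi_i(U) = |\Lambda \cap \Omega_i|$ for each $i$.

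The hard part is the third step, since rank equality alone does not identify submodules of a free module over the DVR $R$ (consider $\pi R \subsetneq R$). The resolution is to exploit the fact that $\Pi_i(\Lambda \cap \Omega_i)$ is not merely linearly independent but is part of the basis $\Pi_i(\Omega_i)$ of the ambient free module $W_i$. Writing $W_i = M_1 \oplus M_2$ with $M_1$ the $R$-span of $\Pi_i(\Lambda \cap \Omega_i)$ and $M_2$ the $R$-span of $\Pi_i(\Omega_i \setminus \Lambda)$, any $x \in \Pi_i(U)$ has a decomposition $x = m_1 + m_2$ with $m_1 \in M_1 \subseteq \Pi_i(U)$, forcing $m_2 = x - m_1 \in \Pi_i(U) \cap M_2$. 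Hence $\Pi_i(U) = M_1 \oplus (\Pi_i(U) \cap M_2)$, and comparing ranks gives ${\rm Rank}_R (\Pi_i(U) \cap M_2) = 0$. As a submodule of the torsion-free $M_2$ it must vanish, so $\Pi_i(U) = M_1$ and $\Pi_i(\Lambda \cap \Omega_i)$ is the desired basis of $\Pi_i(U)$, which is exactly what it means for $\Lambda = \bigcup_i (\Lambda \cap \Omega_i)$ to be a partition associated with $U = \bigoplus_i \Pi_i(U)$.
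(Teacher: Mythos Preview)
The paper states this corollary without proof, so there is nothing to compare your argument against directly. Your three-step argument is correct: the direct-sum decomposition of $U$ is immediate from the invariance hypothesis; the rank count via the squeeze
\[
|\Lambda| = \mathrm{Rank}_R\, U = \sum_i \mathrm{Rank}_R\, \Pi_i(U) \geq \sum_i |\Lambda \cap \Omega_i| = |\Lambda|
\]
is valid because each $\Pi_i(U)$, as a submodule of a free module over the DVR $R$, is itself free; and your third step is the right way to close the gap. Your warning that rank equality alone does not pin down submodules over $R$ (the $\pi R \subsetneq R$ example) is exactly the point, and the remedy you use---that $\Pi_i(\Lambda\cap\Omega_i)$ is not merely independent but part of the \emph{basis} $\Pi_i(\Omega_i)$ of $W_i$, hence spans a direct summand $M_1$ of $W_i$, forcing $\Pi_i(U)=M_1\oplus(\Pi_i(U)\cap M_2)$ with the second summand of rank $0$ and therefore zero---is clean and complete. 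One small point worth making explicit: the decomposition $W_i=M_1\oplus M_2$ relies on $\Pi_i$ being injective on $\Omega_i$, which holds because $\Pi_i(\Omega_i)$ is a basis of $W_i$ of cardinality $|\Omega_i|=\mathrm{Rank}_R\,W_i$ by the construction in Theorem~2.1.
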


The following results and notations is standard as usual(refer to \cite[Chapter 5]{naga}).
\begin{description}
\item[Block decomposition] let
$$V=Z(RG)=\bigoplus_{B\in \bl(G)}e_{B}Z(RG),W_{B}=e_{B}Z(RG).$$
 The projective map from $Z(RG)$ is:$Z(RG)\rightarrow e_{B}Z(RG)$
  \item[Basis of $Z(RG)$] the set of conjugate class sums of $$\Omega=\{\widehat{C}|C\in {\cl}(G),\widehat{C}=\sum_{x\in C}x\} $$
  \item[Rank of $e_{B}Z(RG)$]Note:$K$ is the factor field of $R$. In the $KG$, we have $e_{B}=\sum_{\chi\in {\irr}(B)}e_{\chi}$ and Rank$_{R}(e_{B}Z(RG))=$Dim$_{K}(e_{B}Z(KG))=k(B)$.
\end{description}

\begin{thm}\cite[Chapter 5]{naga}There is a partition of ${\cl}(G)$, the set of conjugate classes of $G$,
\begin{equation}\label{oles}
  {\cl}(G)=\bigcup_{B\in \bl(G)}\Omega_{B}, \Omega_{B}\bigcap\Omega_{B'}=\emptyset,B\neq B'
\end{equation}
such that the following holds:\begin{itemize}
                                \item $\{e_{B}\widehat{C}|C\in \Omega_{B}\}$ is a $R$- basis of $e_{B}Z(RG)$, hence $k(B)=|\Omega_{B}|$
                                \item the value of $\nu$ at det$(\omega_{\chi}(\widehat{C}))_{\chi\in B,C\in \Omega_{B} }$ is not large than any other values of
                                $\nu$ at the $k(B)\times k(B)$ minors of the $k(B)\times k$ matrix det$(\omega_{\chi}(\widehat{C}))_{\chi\in B,C\in {\cl}(G) },k=\cl(G)$.
                              \end{itemize}

\end{thm}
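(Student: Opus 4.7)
The plan is to specialize the general partition Theorem~2.1 to $V = Z(RG)$, equipped with the block decomposition $Z(RG) = \bigoplus_{B \in \bl(G)} e_B Z(RG)$, the $R$-basis $\Omega = \{\widehat{C} : C \in \cl(G)\}$ of class sums, and the projections $\Pi_B : v \mapsto e_B v$. Theorem~2.1 then produces a partition $\cl(G) = \bigcup_{B} \Omega_B$ for which $\{e_B \widehat{C} : C \in \Omega_B\}$ is an $R$-basis of $e_B Z(RG)$; comparing cardinalities against $\mathrm{rank}_R(e_B Z(RG)) = k(B)$ (already recorded in the excerpt) forces $|\Omega_B| = k(B)$ and proves the first bullet.

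For the second bullet, I would translate the $R$-basis property into a valuation statement on the central-character matrix. In $e_B Z(KG)$ we have $e_B \widehat{C} = \sum_{\chi \in \irr(B)} \omega_\chi(\widehat{C}) e_\chi$. Fixing an auxiliary $R$-basis $\{f_1,\ldots,f_{k(B)}\}$ of $e_B Z(RG)$ and writing $e_\chi = \sum_i \gamma_{i\chi} f_i$ with $\gamma_{i\chi} \in K$ yields an invertible transition matrix $T = (\gamma_{i\chi})$ over $K$. For any subset $\Omega' \subseteq \cl(G)$ with $|\Omega'| = k(B)$, the coordinates of $\{e_B \widehat{C} : C \in \Omega'\}$ in the basis $\{f_i\}$ then form the matrix $T \cdot M_{\Omega'}$, where $M_{\Omega'} = (\omega_\chi(\widehat{C}))_{\chi \in \irr(B),\, C \in \Omega'}$.

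Since every $e_B \widehat{C}$ lies in $e_B Z(RG)$, the matrix $TM_{\Omega'}$ has entries in $R$, yielding the universal lower bound $\nu(\det M_{\Omega'}) \geq -\nu(\det T)$ valid for every such $\Omega'$. Equality holds exactly when $\{e_B \widehat{C} : C \in \Omega'\}$ is itself an $R$-basis of $e_B Z(RG)$, i.e.\ when $\det(T M_{\Omega'}) \in R^\times$. Applying this to $\Omega' = \Omega_B$ produced in the first step, $\Omega_B$ automatically attains the universal minimum, which is exactly the claim of the second bullet.

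The main obstacle I anticipate is avoiding the conflation of the $K$-basis condition $\det M_{\Omega'} \neq 0$ with the stronger $R$-basis condition that $\nu(\det M_{\Omega'})$ be minimal among all $k(B) \times k(B)$ minors. The auxiliary basis $\{f_i\}$ sidesteps this cleanly by converting the question into an ordinary change-of-basis argument that compares invertibility over $R$ against invertibility over $K$, after which the minimality assertion becomes a one-line consequence of $\nu(\det(TM_{\Omega'})) \geq 0$.
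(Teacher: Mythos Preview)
Your argument is correct and follows the same outline as the paper: both parts invoke Theorem~2.1 for the partition, and both parts deduce the valuation inequality from the fact that every $e_B\widehat{C}$ lies in the $R$-span of an $R$-basis of $e_B Z(RG)$.

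The only difference is that the paper's proof of the second bullet is more direct than yours: instead of an unspecified auxiliary $R$-basis $\{f_i\}$ and transition matrix $T$, the paper simply takes $\{e_B\widehat{C_i} : C_i \in \Omega_B\}$ itself as the reference $R$-basis. Writing $e_B\widehat{C} = \sum_{C_i \in \Omega_B} a_{CC_i}\, e_B\widehat{C_i}$ with $a_{CC_i} \in R$ and applying $\omega_\chi$ gives the matrix identity $M_{\Omega'} = M_{\Omega_B}\,(a_{CC_i})^{T}$, so $\nu(\det M_{\Omega'}) \ge \nu(\det M_{\Omega_B})$ immediately. In your language this is just the special choice $f_i = e_B\widehat{C_i}$, which makes $T M_{\Omega_B}$ the identity and eliminates the need to track $\nu(\det T)$ separately. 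Your version has the mild advantage of making explicit that \emph{any} $R$-basis of $e_B Z(RG)$ achieves the same minimal valuation, but the paper's choice is the cleaner way to write it down.
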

\begin{proof}
 The first case is a clear result from  Theorem 2.1.

 For any $C\in \cl(G)$, we have
 $$e_{B}\widehat{C}=\sum_{C_{i}\in \Omega_{B}}a_{CC_{i} }e_{B}\widehat{C_{i}},a_{ CC_{i}}\in R$$
 Hence  for $\chi\in \irr(B),C\in \cl(G)$, there is
$$\omega_{\chi}(\widehat{C})=\omega_{\chi}(e_{B}\widehat{C})=\sum_{C_{i}\in \Omega_{B}}a_{CC_{i}}\omega_{\chi}(e_{B}\widehat{C_{i}}),a_{CC_{i}}\in R$$
This is a matrix relation, (just taking $k(B)$ conjugate classes from Cl$(G)$):
$$              (\omega_{\chi}(e_{B}\widehat{C}))=(\omega_{\chi}(e_{B}\widehat{C_{i}}))(a_{CC_{i}})^{T}$$
by which we have the second assertion, as $\omega_{\chi}(e_{B}\widehat{C})=\omega_{\chi}(\widehat{C}).$
\end{proof}

\begin{description}
  \item[Block partition of $\cl(G)$] The partition of $\cl(G)$ in the Theorem 2.3 is called Block partition of  $\cl(G)$.
\end{description}

\bigskip
\section{ Block form of Brauer Permutation Lemma}

In this section, we are going to localize the globe Brauer permutation Lemma for our use in the following sections. The previous two sections allows us to attain our purpose.

 Suppose that a group $A$ acts on the group $G$. A block partition of Cl$(G)$ is given as in Theorem 2.3.
Naturally the action of an element $a$ of $A$ on $G$ produces an automorphism
$$RG: \sum_{g\in G}a_{g}g\mapsto  \sum_{g\in G}a_{g}g^{a}$$
It is also clear that $Z(RG)=Z(RG)^{a}$, that is to say, the center of $Z(RG)$ is fixed under automorphism.

An action of an element $a$ of $A$ produces a permutation on the set Bl$(G)$ and block partition $\{\Omega_{B}|B\in G\}$. This is because
$$
\begin{array}{c}
   e_{B}\mapsto e_{B}^{a} \\
  \Omega_{B}\mapsto \Omega_{B}^{a}=\{C^{a}|C\in \Omega_{B}\}
\end{array}
$$
Since $e_{B}\Omega_{B}$ is a basis of $e_{B}Z(RG),e_{B}^{a}\Omega_{B}^{a}$ is a basis of $e_{B}^{a}Z(RG)$. Hence we say:
$$  \cl(G)=\bigcup_{B}\Omega_{B}^{a}$$
is also a block partition of Cl$(G)$.

Suppose $\{e_{B}^{a}|a\in A\}$ is the orbit of $e_{B}$ under action of $A$ on $G$, then $\{\Omega_{B}^{a}|a\in A\}$ is also an orbit of $A$-action.
Let $A_{B}$ denote the fixed subgroup of $e_{B}$ in $A$ and $A/A_{B}$ is the coset of $A_{B}$ in $A$. Then we have the following clear facts:
\begin{description}
  \item[Center idempotent] we denote $f_{B}=\sum_{a\in A/A_{B}}e_{B}^{a}$, which is an center idempotent of $RG.$
  \item[Set of Irr$f_{B}$] Irr$(f_{B})=\bigcup_{a\in A/A_{B}}{\rm Irr}(B^{a})$.
  \item[Set of conjugate classes corresponding to Irr$f_{B}$] $\Omega_{f_{B}}=\bigcup_{a\in A/A_{B}}\Omega_{B}^{a}.$
\end{description}
By the block partition of Cl$(G),$ we know that
$$f_{B}Z(RG)=\bigoplus_{a\in A/A_{B}}e_{B}^{a}Z(RG)$$
 and it has a basis
$$\bigcup_{a\in A/A_{B}}e_{B}^{a}\Omega_{B}^{a}=\{e_{B}^{a}\widehat{C^{a}}|a\in A/A_{B},C^{a}\in \Omega_{B}^{a}\}
$$
Suppose $\chi \in {\rm Irr}(B)$. We have
$$\chi^{a}(x^{a})=\chi^{a}(e_{B}^{a}x^{a})=\chi(e_{B}x)=\chi(x),x\in C$$
since $\chi^{a}\in {\rm Irr}(B^{a}).$
\begin{description}
  \item[$k(B)$] denote the number of characters in Irr$(B)$ for a block $B$ in $G$.
\end{description}

In a word, we have a summary of  all arguments above:
 \begin{thm}\label{3.1}With the same notations and conditions above, suppose that a group $A$ acts on the group $G$. For a block $B \in {\bl }(G)$,  $A$ naturally acts on the set ${\rm Irr }(f_{B})$ and the set $\Omega_{f_{B}}$
 satisfying:
$$\chi_{i}^{a}(x_{j}^{a})=\chi_{i}(x_{j}),x_{j}\in C_{j},a\in A,i,j=1,2,...,m,x_{j}^{a}\in C_{j}^{a}$$
where $m=\sum_{a\in A/A_{B}}k(B^{a})=|A/A_{B}|k(B)$.
Then the following holds:
\begin{itemize}
  \item for any $a\in A$, the number of fixed irreducible characters in ${\irr}(f_{B})$ equals that of fixed classes in $\Omega_{f_{B}}$.
  \item the number of $A$-orbits in ${\irr}(f_{B})$ equals that of $A$-orbits in $\Omega_{f_{B}}$.
\end{itemize}
 \end{thm}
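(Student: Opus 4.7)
The plan is to apply the Brauer Permutation Lemma (Lemma~1.1) to the $m\times m$ matrix
\[
M=\bigl(\omega_{\chi}(e_{B'}\widehat{C})\bigr)
\]
with rows indexed by $\chi\in\irr(f_{B})$ and columns indexed by pairs $(B',C)$ where $B'$ ranges over the $A$-orbit of $B$ and $C\in\Omega_{B'}$. Under the bijection $(B',C)\leftrightarrow C$ the column index set is identified with $\Omega_{f_{B}}$, and the natural $A$-action on pairs translates to the $A$-action on $\Omega_{f_{B}}$ described before the statement.

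The first key step is to show $M\in\mathrm{GL}(K)$. Since $\omega_{\chi}$ is an algebra homomorphism on $Z(KG)$, the entry $\omega_{\chi}(e_{B'}\widehat{C})$ vanishes when $\chi\notin\irr(B')$ and equals $\omega_{\chi}(\widehat{C})$ when $\chi\in\irr(B')$. Grouping rows by $\irr(B')$ and columns by $\Omega_{B'}$ therefore exhibits $M$ as block diagonal, with $B'$-block equal to $\bigl(\omega_{\chi}(\widehat{C})\bigr)_{\chi\in\irr(B'),\,C\in\Omega_{B'}}$. Each such block is nonsingular by the second bullet of Theorem~2.3, so $\det M\neq 0$.

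The second key step is the intertwining condition. The algebra automorphism of $RG$ induced by $a\in A$ sends $e_{B'}\widehat{C}$ to $e_{(B')^{a}}\widehat{C^{a}}$, and the identity $\chi^{a}(x^{a})=\chi(x)$ derived immediately before the statement gives $\omega_{\chi^{a}}(z^{a})=\omega_{\chi}(z)$ for every $z\in Z(RG)$. Together these produce
\[
M_{\chi^{a},\,((B')^{a},C^{a})}=M_{\chi,(B',C)}\qquad\text{for all }a\in A,
\]
which is exactly the hypothesis $a_{i^{\rho(x)}j^{\sigma(x)}}=a_{ij}$ of Lemma~1.1, with $\rho$ the $A$-permutation of rows and $\sigma$ the $A$-permutation of columns.

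Lemma~1.1 then delivers both bullets of the theorem. A column pair $(B',C)$ is fixed by $a$ if and only if the underlying class $C\in\Omega_{f_{B}}$ is fixed by $a$, because $C^{a}=C\in\Omega_{B'}$ together with the disjointness of the block partition of $\cl(G)$ already forces $(B')^{a}=B'$; hence the fixed row count equals the fixed class count, and the orbit-count statement follows from the same bijection. The main obstacle I foresee is the nonsingularity of $M$: the most natural candidate, the character-value matrix $(\chi_{i}(x_{j}))$ restricted to $\irr(f_{B})\times\Omega_{f_{B}}$, can be singular, so replacing $\widehat{C}$ by $e_{B'}\widehat{C}$ to obtain a block-diagonal structure before invoking Theorem~2.3 is what makes the argument work.
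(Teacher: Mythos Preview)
Your proof is correct. In the paper, Theorem~3.1 carries no independent proof at all: it is introduced with the phrase ``In a word, we have a summary of all arguments above,'' those arguments being that $\{e_{B}^{a}\widehat{C^{a}}\mid a\in A/A_{B},\ C\in\Omega_{B}\}$ is a basis of $f_{B}Z(RG)$ and that $\chi^{a}(x^{a})=\chi(x)$, after which Lemma~1.1 is understood to apply in the manner of Corollary~1.2.

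Your matrix $M=\bigl(\omega_{\chi}(e_{B'}\widehat{C})\bigr)$ is precisely the transition matrix between the basis just mentioned and the idempotent basis $\{e_{\chi}:\chi\in\irr(f_{B})\}$ of $f_{B}Z(KG)$, so you are making the paper's implicit step explicit rather than taking a different route. Your remark that the plain restricted character matrix $(\chi_{i}(x_{j}))$ on $\irr(f_{B})\times\Omega_{f_{B}}$ is not obviously invertible, and that inserting the factor $e_{B'}$ forces block-diagonal structure and hence nonsingularity, is a genuine clarification that the paper glosses over; the displayed relation $\chi_{i}^{a}(x_{j}^{a})=\chi_{i}(x_{j})$ in the statement suggests the plain matrix, but the basis actually exhibited in the text is yours. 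One small point: the nonsingularity of each diagonal block follows most directly from the \emph{first} bullet of Theorem~2.3 (the set $\{e_{B'}\widehat{C}:C\in\Omega_{B'}\}$ is an $R$-basis of $e_{B'}Z(RG)$, hence a $K$-basis, and $M$ is the change-of-basis matrix to the $e_{\chi}$), rather than from the second bullet, which is a valuation-minimality statement and does not by itself assert nonvanishing of the determinant.
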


 In particular, if $B$ is fixed under $A$-action, we have
\begin{cor}
With the same notations and conditions above, suppose that a group $A$ acts on the group $G$. For a block $B \in {\bl }(G)$,  if $B$ is fixed under $A$-action, then $A$ naturally acts on the set \irr ($B$) and the set $\Omega_{B}$
 satisfying:
$$\chi_{i}^{a}(x_{j}^{a})=\chi_{i}(x_{j}),x_{j}\in C_{j},a\in A,i,j=1,2,...,m,x_{j}^{a}\in C_{j}^{a}$$
where $m=k(B)$,
and the following holds:
\begin{itemize}
  \item for any $a\in A$, the number of fixed irreducible characters in $\irr(B)$ equals that of fixed classes in $\Omega_{B}$.
  \item the number of $A$-orbits in $\irr(B)$ equals that of $A$-orbits in $\Omega_{B}$.
\end{itemize}
\end{cor}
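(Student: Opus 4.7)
The plan is to deduce Corollary 3.2 as a direct specialization of Theorem 3.1. When $B$ is fixed under the $A$-action, the stabilizer $A_B$ equals $A$, so the coset space $A/A_B$ is trivial. Reading off the definitions that precede Theorem 3.1, this means $f_B = \sum_{a \in A/A_B} e_B^{a} = e_B$, and consequently $\operatorname{Irr}(f_B) = \operatorname{Irr}(B)$ and $\Omega_{f_B} = \Omega_B$. The index $m = |A/A_B|\,k(B)$ collapses to $m = k(B)$, which matches the statement we want.

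Next I would verify the character-invariance hypothesis of Theorem 3.1 in this setting. The computation $\chi^{a}(x^{a}) = \chi^{a}(e_B^{a} x^{a}) = \chi(e_B x) = \chi(x)$ was already carried out in the paragraph preceding Theorem 3.1 for arbitrary $\chi \in \operatorname{Irr}(B)$, using only that $e_B$ is the block idempotent and that $a$ acts as a ring automorphism of $RG$. Since $B$ is $A$-fixed we moreover have $\chi^{a} \in \operatorname{Irr}(B)$, so the $A$-action restricts cleanly to $\operatorname{Irr}(B)$ and to $\Omega_B$, with the required compatibility $\chi_i^{a}(x_j^{a}) = \chi_i(x_j)$ for all $a \in A$ and all $i,j \in \{1,\dots,k(B)\}$.

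With these reductions in place, the two bulleted conclusions follow by applying Theorem 3.1 verbatim: the count of $A$-fixed characters in $\operatorname{Irr}(f_B) = \operatorname{Irr}(B)$ equals the count of $A$-fixed classes in $\Omega_{f_B} = \Omega_B$, and the number of $A$-orbits on each side agrees. I do not expect any real obstacle here; the only substantive point worth underlining in the write-up is the identification $f_B = e_B$ and the attendant equalities $\operatorname{Irr}(f_B) = \operatorname{Irr}(B)$, $\Omega_{f_B} = \Omega_B$, since these are what turn the general sum-over-orbits statement of Theorem 3.1 into the single-block statement of the corollary.
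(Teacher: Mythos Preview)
Your proposal is correct and matches the paper's approach: the corollary is stated there as the immediate specialization of Theorem~3.1 to the case $A_B = A$, with no separate proof given. Your explicit identification $f_B = e_B$, $\operatorname{Irr}(f_B) = \operatorname{Irr}(B)$, $\Omega_{f_B} = \Omega_B$, and $m = k(B)$ spells out precisely the reduction the paper leaves implicit.
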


\bigskip

 \section{ Frobenius Corresponding Block}

From this section in whole paper, we suppose $N$ is a normal subgroup of $G$. Bl$(N)$ is the set of blocks of $N$. If $b$ is a block of $N$, we use $e_{b}$ to denote the block idempotent of $b$.
Consider the group $G$ acts on its normal subgroup $N$ by conjugation. Suppose we have block partition of Cl$(N)$, denoted by
$$\Omega=\bigcup_{b\in {\rm Bl}(N)}\Omega_{b}$$
We keep to use the same notations  as before. For a block $b$ of $N$, let
$$T(b)=\{x\in G|b^{x}=b\}$$
 denote the inertial group of $b$. Then
\begin{description}
\item[Center idempotent ]$f_{b}=\sum_{x\in G/T(b)}e_{b}^{x}, e_{b}^{x}=e_{b^{x}}$
  \item[Set of irreducible characters of $f_{b}$] Irr$(f_{b})=\bigcup_{x\in G/T(b)}{\rm Irr}(b^{x})$
  \item[Set of classes of $f_{b}$ by block partition]$\Omega_{f_{b}}= \bigcup_{x\in G/T(b)}\Omega_{b}^{x}=\Omega_{b^{x}}$
\end{description}
Note $f_{b}$ is also a center idempotent of $RG$ and it has a block idempotent decomposition in $Z(RG)$:
$$f_{b}=\sum_{B}e_{B}$$
As usually the block $b$ is said to be covered by a block $B$ if $e_{B}$ appears in the sums above.

Now we give a definition of Frobenius corresponding block.
\begin{defn}
With the same notations as above, suppose $N$ is a normal subgroup of $G$. For $b \in {\rm Bl}(N)$ and $B \in {\rm Bl}(G)$,
we use $(b,B)$ denote $b$ covered by $B$. For any nontrivial $\chi \in {\rm Irr}(b)$, if  induced $\chi^{G}$ as $G$ character is irreducible, we say
$(b,B)$ is a pair of Frobenius corresponding blocks, or $b$ is Frobenius block corresponding to $B$.
\end{defn}

Note: Since $b$ is isomorphic to $b^{x},x\in G$ by $G$-conjugation, $(b,B)$ is a pair of Frobenius corresponding blocks if and only if $(b^{x},B)$ is a pair of
Frobenius corresponding blocks.

For example, If $G$ is a Frobenius group with kernel $N$, then all $(b,B)$ is a pair of Frobenius corresponding blocks.

What is the relation between Irr$(b)$ and Irr$(B)$ when $b$ is covered by $B$? The following result should be well-known(Nagao's book, Page P337-339, Lemma 5.4, Lemma 5.7, Lemma 5.8):
\begin{lem} Let $B$ covers $b$. Then
\begin{itemize}
\item $B$ covers $b$, if and only if: any $\chi \in {\rm Irr}(B) $, all irreducible constituent of $\chi_{N}$ belong to some Irr$(b^{x})$.
  \item Let $\chi\in {\rm Irr}(B)$. Then for each $x\in G/T(b)$,  there exists $\varphi \in {\rm Irr}(b^{x})$ as an irreducible constituent of $\chi_{N}$
  \item If $\varphi \in {\rm Irr}(b)$, there is always $\chi\in {\rm Irr}(B)$ such that $\varphi$ is an irreducible constituent of $\chi_{N}$
  \item if  $\varphi \in {\rm Irr}(b)$, there exists $\chi\in {\rm Irr}(B)$ as an irreducible constituent of $\varphi^{G}$
  \item $B$ covers $b$, if and only if any  $\varphi \in {\rm Irr}(b)$, there exists $\chi \in {\rm Irr}(B) $ as an irreducible constituent of $\varphi^{G}$
\end{itemize}
\end{lem}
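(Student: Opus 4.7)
The plan is to combine Clifford's theorem with the idempotent characterization of covering established in Section 4, namely that $B$ covers $b$ if and only if $e_B f_b = e_B$ in $Z(RG)$.

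For parts (i) and (ii), the forward direction of (i) uses that $e_B f_b = e_B$ forces $f_b = \sum_{x \in G/T(b)} e_{b^x}$ to act as the identity on every module affording a $\chi \in \irr(B)$; restricting to $N$ and noting that each $e_{b^x} \in Z(RN)$ is the block projection onto $b^x$, every irreducible constituent $\varphi$ of $\chi_N$ must lie in some $\irr(b^x)$. The converse is symmetric: if every $\chi_N$ has constituents only in $\bigcup_x \irr(b^x)$, then $f_b$ acts as identity on each such representation, forcing $e_B f_b = e_B$. Part (ii) is then Clifford's theorem: the constituents of $\chi_N$ form a single $G$-orbit $\{\varphi^y : y \in G/T(\varphi)\}$, and since $T(\varphi) \le T(b)$ this orbit meets every $\irr(b^x)$ ($x \in G/T(b)$) in a nonempty $T(b)$-orbit of size $[T(b):T(\varphi)]$.

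Parts (iii) and (iv) are equivalent by Frobenius reciprocity $\langle \chi_N, \varphi \rangle = \langle \chi, \varphi^G \rangle$. My approach is to compute $\varphi^G(e_B)$ via the trace formula $\varphi^G(\omega) = [G:N] \varphi(\omega|_N)$ valid for $\omega \in Z(RG)$ (where $\omega|_N$ is the restriction of the $RG$-expansion to $N$-coefficients), combined with the Clifford-type expansion of each character idempotent $e_\chi|_N$ as a positive multiple of $\sum_y e_{\varphi_\chi^y}$ (where $\varphi_\chi$ is any constituent of $\chi_N$). This shows that $\varphi^G(e_B) > 0$ is equivalent to some $\chi \in \irr(B)$ having $\varphi$ as a constituent of $\chi_N$. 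Strict positivity follows from $\varphi^G(f_b) = [G:N] \varphi(1) > 0$ (since $f_b$ acts as identity on $V_\varphi$) together with $f_b = \sum_{B \text{ covers } b} e_B$, provided the contribution from each $B$ separately is positive. Finally, (v) follows from (iv) in one direction, and from the (i)-characterization together with Frobenius reciprocity in the other: if some $\chi \in \irr(B)$ has $\varphi \in \irr(b)$ as a constituent of $\chi_N$, then $f_b$ acts nontrivially on $V_\chi$, forcing $e_B f_b = e_B$.

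The main obstacle is the ``for every $\varphi$'' quantifier in (iii)/(iv): the identity $\varphi^G(f_b) > 0$ only shows that some $B$ covering $b$ has a nonzero contribution, not each such $B$ individually. The cleanest route to the per-$B$ statement is a Fong--Reynolds reduction to the case $T(b) = G$, where blocks of $G$ covering $b$ admit a transparent parametrization and the matching of characters with $T(b)$-orbits in $\irr(b)$ becomes explicit. An alternative is to apply Corollary 3.2 to the natural $G$-action on $\irr(b)$ and on the block partition $\Omega_{f_b}$, matching $G$-orbit counts directly to force surjectivity of $\chi \mapsto G\text{-orbit of constituents of }\chi_N$ from $\irr(B)$ to the orbits of $\bigcup_x \irr(b^x)$.
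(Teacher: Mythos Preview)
The paper does not prove this lemma at all: it is stated as a standard fact with a pointer to Nagao--Tsushima (pp.~337--339, Lemmas 5.4, 5.7, 5.8), and no argument is given. So there is no ``paper's approach'' to compare against beyond the citation.

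Your arguments for (i), (ii), and (v), and the Frobenius-reciprocity equivalence of (iii) and (iv), are correct and are essentially the textbook line. The difficulty you flag is also the real one: from $\varphi^G(f_b)=[G:N]\varphi(1)>0$ you only get that \emph{some} block covering $b$ contributes, not that the \emph{given} $B$ does. Your proposed computation of $\varphi^G(e_B)$ via $e_\chi|_N$ does not close this gap, because the contributions $\varphi^G(e_B)$ for distinct $B$ covering $b$ are not individually forced to be nonzero by the sum being positive.

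Of your two suggested fixes, the orbit-counting idea via Corollary~3.2 does not work as stated: that corollary equates numbers of $G$-orbits on $\irr(f_b)$ and on $\Omega_{f_b}$, but says nothing about the map $\irr(B)\to\{G\text{-orbits in }\irr(f_b)\}$ being surjective, which is what (iii) asserts. The Fong--Reynolds route is the standard one and does succeed: reduce to $T(b)=G$, where $e_b\in Z(RG)$ and $e_Be_b=e_B$; then the linkage of all $\varphi\in\irr(b)$ through common modular constituents (the definition of a block), together with the fact that sharing a modular constituent in $b$ forces the corresponding ordinary characters to lie under characters of the \emph{same} covering block of $G$, propagates membership from one $\varphi$ (obtained from (ii)) to every $\varphi\in\irr(b)$. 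You have not carried this out, so as written the proposal has a genuine gap at (iii)/(iv); but the gap is exactly where you say it is, and the Fong--Reynolds reduction you name is the right tool to fill it.
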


Consider block cover is correspondence from  a set of conjugate blocks in $N$ to more blocks in $G$.
\begin{cor}
If $(b,B)$ is a pair of Frobenius corresponding blocks, then $B$ is only one block to cover $b$.
\end{cor}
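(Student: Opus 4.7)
The plan is to argue by contradiction: assume some block $B'\neq B$ of $G$ also covers $b$, pick a suitable $\varphi\in\irr(b)$, and show that the irreducible character $\varphi^G$ would have to lie in both $B$ and $B'$, contradicting the fact that blocks partition $\irr(G)$.

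Concretely, I would first take a nontrivial $\varphi\in\irr(b)$. By the defining property of the Frobenius corresponding pair $(b,B)$, we have $\varphi^G\in\irr(G)$, so $\varphi^G$ lies in a unique block of $G$. Now I would invoke Lemma 4.2: since $B$ covers $b$ and $\varphi\in\irr(b)$, the fourth bullet yields some $\chi\in\irr(B)$ appearing as an irreducible constituent of $\varphi^G$. But $\varphi^G$ is already irreducible, forcing $\chi=\varphi^G$ and hence $\varphi^G\in\irr(B)$. Applying exactly the same argument to the hypothetical second covering block $B'$ gives $\varphi^G\in\irr(B')$, so $\irr(B)\cap\irr(B')\neq\emptyset$, which forces $B=B'$.

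The only thing that feels like a real obstacle is making sure the argument is not vacuous, i.e.\ that a nontrivial $\varphi\in\irr(b)$ actually exists to which the Frobenius inducing hypothesis applies. Outside the trivial situation where $b$ is the principal block of a $p'$-group consisting only of the trivial character (in which case the uniqueness statement is either trivial or a known standard fact about principal blocks), $\irr(b)$ contains a nontrivial character and the argument above goes through unchanged. I would mention this briefly and then present the one-paragraph contradiction proof sketched above; beyond that there is no real calculation to carry out, since Lemma 4.2 does all the work and Frobenius reciprocity is implicit in the ``constituent of $\varphi^G$'' language.
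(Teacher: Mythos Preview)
Your approach is exactly the paper's: pick a nontrivial $\varphi\in\irr(b)$, observe that $\varphi^{G}$ is irreducible and hence lies in a unique block of $G$, and use Lemma~4.2 to identify that block with any block covering $b$. The degenerate case $\irr(b)=\{1_{N}\}$ that you flag is likewise skipped in the paper's one-line proof; note, however, that your parenthetical fix is not quite right---uniqueness can genuinely fail there (take $N=1$, or more generally $N$ a $p'$-group with $G/N$ having several $p$-blocks)---so this should be regarded as a standing tacit assumption of the paper rather than a separately provable fact.
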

\begin{proof}
Since $\varphi \in {\rm Irr}(b)$ is induced to an irreducible character $\varphi^{G}$, it belongs to a unique block $B$ of $G$.
\end{proof}

Note that $f_{b}$ is a primitive idempotent of $(RN)^{G}$. Hence we also have
\begin{cor}
If $(b,B)$ is a pair of Frobenius corresponding blocks, then the block idempotent $e_{B}$ of $B$ belongs to $(RN)^{G}$.
\end{cor}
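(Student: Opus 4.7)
The plan is to combine the preceding Corollary 4.3 with the standard description of $f_{b}$ as a central idempotent of $RG$. The upshot is that under the Frobenius corresponding hypothesis, $f_{b}$ collapses to the single idempotent $e_{B}$, and since $f_{b}$ is manifestly in $(RN)^{G}$, so is $e_{B}$.

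More concretely, I would proceed as follows. First, I would record that $f_{b}=\sum_{x\in G/T(b)}e_{b}^{x}$ is by construction a $G$-invariant element of $RN$: each $e_{b^{x}}$ lies in $RN$ because $N$ is normal (and so $G$ acts on $RN$ by conjugation), and summing over the full $G$-orbit of $e_{b}$ produces a fixed point under this action. Hence $f_{b}\in (RN)^{G}$, and in fact $f_{b}$ is a central idempotent of $RG$ via the inclusion $(RN)^{G}\hookrightarrow Z(RG)$ (this is the classical fact used just before the definition of the Frobenius corresponding pair). Second, I would decompose $f_{b}$ into primitive central idempotents of $RG$: writing $f_{b}=\sum_{B'}e_{B'}$, the standard block-cover criterion (restated in Lemma 4.2) identifies the blocks $B'$ of $G$ occurring in this sum as precisely those that cover $b$.

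Third, I would invoke Corollary 4.3: the Frobenius corresponding hypothesis forces $B$ to be the unique block of $G$ covering $b$. Combining with the previous step gives $f_{b}=e_{B}$, so that $e_{B}\in (RN)^{G}$, as required.

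There is no real obstacle here; the only point to handle with care is the correspondence between blocks of $G$ covering $b$ and the primitive central idempotents of $RG$ appearing in the decomposition of $f_{b}$. This is a routine application of the Brauer correspondence and the last bullet of Lemma 4.2, and the proof is essentially a one-line deduction from Corollary 4.3 once the structural description of $f_{b}$ has been recalled.
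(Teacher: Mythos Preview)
Your proposal is correct and follows exactly the paper's approach: the paper's proof is the single line $e_{B}=f_{b}=\sum_{x\in G/T(b)}e_{b}^{x}$, and you have simply unpacked why $e_{B}=f_{b}$ (via Corollary~4.3 and the block-cover decomposition of $f_{b}$) and why $f_{b}\in (RN)^{G}$. There is no substantive difference between the two arguments.
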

\begin{proof}
This is because
$$e_{B}=f_{b}=\sum_{x\in G/T(b)}e_{b}^{x}$$
\end{proof}

By Theorem 3.1, the block form of Brauer permutation Lemma, we have a equivalent condition for a pair of Frobenius corresponding blocks.
\begin{description}
  \item[Inertial group $T(\chi)=$]$\{x\in G|\chi^{x}=\chi\} , \chi \in {\rm Irr}(b)$
\end{description}

\begin{thm}
With the same notations as above, $(b,B)$ is a pair of Frobenius corresponding blocks if and only if any $1\neq C \in \Omega_{b},C$ is only fixed by $N$ under $G$-conjugate action.
\end{thm}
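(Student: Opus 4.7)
\emph{Plan.} The strategy is to combine the Clifford criterion for irreducibility of induced characters with the block form of Brauer's permutation lemma (Theorem 3.1), applied with $A=G$ acting on $N$ by conjugation. The preparatory observation is that, by Frobenius reciprocity, $\langle \varphi^G, \varphi^G\rangle = [T(\varphi):N]$ for $\varphi \in \irr(N)$, so $\varphi^G$ is irreducible if and only if $T(\varphi) = N$. Thus $(b,B)$ being Frobenius corresponding is equivalent to $T(\varphi) = N$ for every nontrivial $\varphi \in \irr(b)$. Since $N$-conjugation already fixes every $N$-class, the $G$-stabilizer of any $C \in \cl(N)$ always contains $N$, and the orbit condition in the statement simply says that this stabilizer is no larger than $N$. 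The key input is then
\[
\#\{\psi \in \irr(f_b) : \psi^a = \psi\} \;=\; \#\{C \in \Omega_{f_b} : C^a = C\}, \quad a \in G,
\]
supplied by Theorem 3.1.

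For the forward direction, assume $(b,B)$ is Frobenius corresponding. Every nontrivial $\varphi \in \irr(b)$ has $T(\varphi) = N$, and since $N \trianglelefteq G$ the same holds for every nontrivial $\psi = \varphi^x \in \irr(b^x)$. Hence for any $a \in G\setminus N$ the left-hand count above is at most $1$, attained only by $1_N$ in the principal-block case. On the right-hand side, the class $\{1\}$ accounts for the same single fixed element when $b = B_0$. So every $\{1\} \neq C \in \Omega_b \subseteq \Omega_{f_b}$ is unfixed by every $a \in G\setminus N$, giving $\{x \in G : C^x = C\} = N$.

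For the converse, first lift the hypothesis from $\Omega_b$ to $\Omega_{f_b}$: every class in $\Omega_{f_b}$ has the form $C^x$ for some $C \in \Omega_b$ and $x \in G$, because conjugation by $x$ sends the basis $\{e_b \widehat{C}\}_{C \in \Omega_b}$ of $e_b Z(RN)$ to a basis of $e_{b^x} Z(RN)$, and stabilizers transform as $x\{g : C^g = C\}x^{-1} = xNx^{-1} = N$ whenever $C \neq \{1\}$. So for $a \notin N$ the right-hand count is at most $1$, realised only by $\{1\}$ when $b = B_0$. Theorem 3.1 then forces the same bound on the left, so every nontrivial $\varphi \in \irr(b)$ satisfies $T(\varphi) = N$ and thus induces irreducibly to $G$; Corollary 4.3 picks out the unique block $B$ covering $b$ in which all these induced characters lie. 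The main obstacle is the principal-block bookkeeping: the class $\{1\}$ is always $G$-fixed while $1_N$ has $T(1_N) = G$, and the exclusions ``$1 \neq C$'' on the class side and ``nontrivial $\chi$'' on the character side are precisely what makes the Brauer-permutation counts balance when $b = B_0$.
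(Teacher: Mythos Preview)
Your proof is correct and follows the same approach as the paper: reduce the Frobenius condition to $T(\varphi)=N$ for all nontrivial $\varphi\in\irr(b)$ via the Clifford criterion, and then match character stabilizers to class stabilizers using the block form of Brauer's permutation lemma (Theorem~\ref{3.1}). The paper's proof is a two-line sketch invoking exactly these two ingredients; your version supplies the details the paper omits, namely the passage from $\Omega_b$ to $\Omega_{f_b}$ via conjugacy and the bookkeeping that balances the trivial class $\{1\}$ against the trivial character $1_N$ in the principal-block case. (One minor notational slip: you write ``$b=B_0$'' where the paper's convention would have $b=b_0$ for the principal block of $N$.)
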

\begin{proof}
Note that for any $1\neq\chi\in {\rm Irr}(b), \chi^{G}$ is irreducible if and only if its inertial group $T(\chi)$ is $N$. Hence by Theorem 3.1, we have the assertion.
\end{proof}

By using block partition of the set Cl$(N)$ of conjugate classes of $N$, We give a pure group condition for a pair of Frobenius corresponding blocks.
\begin{thm}
Suppose $N$ is normal subgroup of $G$ and $b$ is a block of $N$. Let $\Omega_{b}$ be the set of these conjugate classes decided by block partition of ${\rm Cl}(N)$. Then $(b,B)$ is a pair of Frobenius corresponding blocks if and only if: for any $1\neq a\in C, C\in \Omega_{b}, C_{G}(a)\leq N.$
\end{thm}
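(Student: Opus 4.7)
The plan is to reduce Theorem~4.6 directly to Theorem~4.5 by computing, in elementary group-theoretic terms, the setwise stabilizer in $G$ of an $N$-conjugacy class $C \in \Omega_{b}$. Theorem~4.5 tells us that $(b,B)$ is a pair of Frobenius corresponding blocks if and only if for every nontrivial $C \in \Omega_{b}$ the only elements of $G$ fixing $C$ under conjugation lie in $N$. My claim is that the $G$-stabilizer of $C = a^{N}$ is exactly $C_{G}(a)\cdot N$, so the stabilizer equals $N$ precisely when $C_{G}(a) \le N$.

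For the stabilizer computation, fix a representative $a \in C$ and an element $x \in G$. Since $N$ is normal in $G$, we have
$$C^{x} = (a^{N})^{x} = (a^{x})^{N}.$$
Hence $C^{x} = C$ if and only if $a^{x} \in a^{N}$, i.e.\ there exists $n \in N$ with $a^{x} = a^{n}$, equivalently $xn^{-1} \in C_{G}(a)$, equivalently $x \in C_{G}(a) N$. This shows that the setwise stabilizer of $C$ in $G$ is the subgroup $C_{G}(a) N$ (which is a subgroup because $N$ is normal). Since $N$ is automatically contained in this stabilizer, the condition that the stabilizer equals $N$ is equivalent to $C_{G}(a) N = N$, i.e.\ $C_{G}(a) \le N$.

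Combining this equivalence with Theorem~4.5 yields Theorem~4.6: the nontrivial $C \in \Omega_{b}$ are fixed in $G$ only by $N$ iff $C_{G}(a) \le N$ for every $1 \neq a \in C$. One small consistency check: if $a' = a^{n} \in C$ with $n \in N$, then $C_{G}(a') = C_{G}(a)^{n}$, and normality of $N$ gives $C_{G}(a)^{n} \le N \iff C_{G}(a) \le N$, so the stated condition is well-posed as a condition on the class rather than on a chosen representative.

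There is essentially no obstacle here: the only nontrivial content has already been absorbed into Theorem~4.5 via the block form of Brauer's permutation lemma, and what remains is just the standard translation between "$G$-stabilizer of an $N$-class" and "centralizer of a representative modulo $N$".
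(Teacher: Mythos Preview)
Your proof is correct and follows essentially the same route as the paper: reduce to Theorem~4.5 and then identify the setwise $G$-stabilizer of an $N$-class $C$ with $C_{G}(a)N$. The paper argues the two directions separately (first $C_{G}(a)\le C_{G}(C)$, then the reverse implication by contradiction), whereas you package the same computation into a single identity $\mathrm{Stab}_{G}(C)=C_{G}(a)N$; this is a cosmetic difference, not a different method.
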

\begin{proof}
By Theorem 4.5, it is suffice to prove the condition is equivalent to: for any $1\neq C\in \Omega_{b}, C$ is only fixed by $N$ under $G$-conjugate action. We denote the fixed-point group $C$ by $C_{G}(C).$ First we claim that:
$$        C_{G}(a)\leq C_{G}(C), a\in C$$
Let $x\in C_{G}(a)$, then$$ \begin{array}{c}
                            a^{x}=a \\
                            \Rightarrow \forall n\in N, a^{nx} =a^{xn^{x}}=a^{n^{x}},n^{x}\in N\\
                            \Rightarrow a^{nx}\in C\\
                            \Rightarrow x\in C_{G}(C)
                          \end{array}
$$
Our claim is proved. Hence
$$C_{G}(C)\leq N\Rightarrow C_{G}(a)\leq N, \forall a\in C.$$

Conversely, suppose $C_{G}(a)\leq N, a\in C$. We want to prove $C_{G}(C)\leq N.$ By using contradictions, suppose $x\in C_{G}(C)-N.$
Since $a^{x}=a^{n}\in C,n\in N$, we have
$$\begin{array}{c}
    a^{xn^{-1}}=a \\
    \Rightarrow  xn^{-1}\in N \\
    \Rightarrow x=xn^{-1}n\in N
  \end{array}
$$
which is a contradiction. Hence $C_{G}(C)\leq N.$
\end{proof}

\section{ Some properties of Frobenius corresponding blocks}

Keep to use the same notations as before. Let $N$ be a normal subgroup of $G$. We denote the principal block of $G$ by $B_{0}$: containing trivial character $1_{R}.$ Similarly, $b_{0}$ is the principal block of $N$.

\begin{lem}Suppose $(b_{0},B_{0})$ is a pair of Frobenius corresponding blocks. Then ${\rm Irr}(G/N)$ belongs to principal $B_{0}$.
\end{lem}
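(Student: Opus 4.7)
The plan is to fix an arbitrary $\chi\in\irr(G/N)$, inflate it to an irreducible character of $G$ (so that $N\le\ker\chi$), and show that the unique $p$-block $B$ of $G$ containing $\chi$ must equal $B_0$.

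First I would look at the restriction $\chi_N$. Because $N\le\ker\chi$, we have $\chi_N=\chi(1)\cdot 1_N$, so the only irreducible constituent of $\chi_N$ is the trivial character $1_N$ of $N$, and $1_N\in\irr(b_0)$ by the definition of the principal block. Since $b_0$ is $G$-stable (the trivial character is fixed under $G$-conjugation), combining this observation with the first bullet of Lemma 4.2 forces $B$ to cover $b_0$: the constituents of $\chi_N$ lie in $\irr(b_0^{x})=\irr(b_0)$, so the block of $N$ covered by $B$ through $\chi$ is exactly $b_0$.

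Second I would apply Corollary 4.3 to the pair $(b_0,B_0)$. Since $(b_0,B_0)$ is a pair of Frobenius corresponding blocks by hypothesis, Corollary 4.3 says that $B_0$ is the unique block of $G$ covering $b_0$. Combined with the previous step this gives $B=B_0$, hence $\chi\in\irr(B_0)$. As $\chi$ was an arbitrary element of $\irr(G/N)$, we conclude $\irr(G/N)\subseteq\irr(B_0)$.

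There is no serious obstacle in this argument beyond assembling the two preceding results in the right order. The only minor subtlety worth flagging in the write-up is the $G$-stability of the principal block $b_0$, which is what lets the conclusion ``$B$ covers some $G$-conjugate of $b_0$'' collapse to ``$B$ covers $b_0$''; without this small remark one would have to carry the $G$-conjugation through before applying Corollary 4.3.
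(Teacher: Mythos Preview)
Your proposal is correct and follows essentially the same route as the paper: take $\chi\in\irr(G/N)$, observe that $\chi_N$ has only the trivial character $1_N\in\irr(b_0)$ as a constituent so the block $B$ containing $\chi$ covers $b_0$, and then invoke Corollary~4.3 to conclude $B=B_0$. The paper cites Nagao's Theorem~5.9 directly for the covering step, whereas you route it through the first bullet of Lemma~4.2 together with the $G$-stability of $b_0$; these are equivalent formulations of the same standard fact.
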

\begin{proof}
Take any $\chi \in {\rm Irr}(G/N)$ and suppose $\chi \in  {\rm Irr}(B'),B' \in {\rm Bl}(G).$  Since $\chi_{N}$ have an irreducible constituent in common with $1_{R}$, then $B'$ and $B_{0}$ cover the same block $b_{0}$(Refer to Nagao's book, Page 340, Theorem 5.9). By Corollary 4.3 $B_{0}$ is the unique block to cover $b_{0}$. Hence $B'=B_{0}.$ Therefore Irr$(G/N)$ belongs to $B_{0}.$
\end{proof}
Noation: For a finite group $G$, let $\pi (G)$ denote the set of all prime number dividing $|G|$.
By Lemma above, we have
\begin{cor}With the same notation as above,
Let $N$ be a normal subgroup of $G$. If $p-$principal block $(b_{0}(p),B_{0}(p))$ is a pair of Frobenius corresponding blocks for all $p \in \pi(N)$, then
$${\rm Irr}(G/N)\subseteq \bigcap_{p\in \pi(N)}{\rm Irr}B_{0}(p).$$
\end{cor}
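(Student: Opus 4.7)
The proof is essentially a prime-by-prime application of the preceding Lemma~5.1, so it will be very short.

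My plan is as follows. I will fix an arbitrary prime $p \in \pi(N)$ and show that $\mathrm{Irr}(G/N) \subseteq \mathrm{Irr}(B_{0}(p))$. Once this inclusion is established uniformly in $p$, intersecting over $p \in \pi(N)$ immediately yields the desired inclusion
$$\mathrm{Irr}(G/N) \subseteq \bigcap_{p \in \pi(N)} \mathrm{Irr}(B_{0}(p)).$$

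For the fixed prime $p$, the hypothesis supplies exactly what is needed: the pair $(b_{0}(p), B_{0}(p))$ consisting of the principal $p$-block of $N$ and the principal $p$-block of $G$ is a pair of Frobenius corresponding blocks. This is precisely the setting of Lemma~5.1 (applied with respect to the prime $p$, so that the splitting $p$-modular system $(K,R,F)$ is the one relative to this $p$, and ``principal block'' means principal $p$-block). That lemma then yields $\mathrm{Irr}(G/N) \subseteq \mathrm{Irr}(B_{0}(p))$.

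There is no real obstacle: every character $\chi \in \mathrm{Irr}(G/N)$ has $1_{R}$ as a constituent of $\chi_{N}$, so the $p$-block $B'$ of $G$ containing $\chi$ and the principal $p$-block $B_{0}(p)$ both cover the principal $p$-block $b_{0}(p)$ of $N$; but by Corollary~4.3, the Frobenius-corresponding hypothesis guarantees that $B_{0}(p)$ is the \emph{unique} block of $G$ covering $b_{0}(p)$, forcing $B' = B_{0}(p)$. Since this argument is valid for every $p \in \pi(N)$, the intersection statement follows at once. The corollary is thus a straightforward iteration of Lemma~5.1 and requires no additional technical input.
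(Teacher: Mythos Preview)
Your proposal is correct and matches the paper's approach exactly: the corollary is stated as an immediate consequence of Lemma~5.1, applied once for each prime $p\in\pi(N)$, and then intersecting. Your third paragraph even reproduces the argument of Lemma~5.1 itself, which is more detail than the paper gives, but entirely in line with it.
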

Now we give a relation between $k(B)$ and $k(b)$, which denote the number of irreducible characters of $B$ and $b$, respectively.

\begin{thm}
Suppose $(b,B)$ is a pair of Frobenius corresponding blocks. Then we have
\begin{enumerate}
  \item \begin{equation*}
  k(B_{0})=k(G/N)+\frac{k(b_{0})-1}{|G/N|}
\end{equation*}
for principal block.
  \item
  \begin{equation*}
    k(B)=\frac{|N|k(b)}{|T(b)|}
\end{equation*}
for non-principal blocks.
\end{enumerate}
\end{thm}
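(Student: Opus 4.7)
The plan is to combine Clifford theory with the Frobenius corresponding hypothesis. The key uniformity is: for every nontrivial $\varphi \in \irr(b^x)$, the fact that $\varphi^G$ is irreducible forces $T(\varphi) = N$ (equivalently, by Theorem 4.5, only $N$ fixes the associated class under $G$-conjugation), so every $G$-orbit of such a $\varphi$ inside $\irr(N)$ has size exactly $|G/N|$. This orbit-size uniformity drives both counts.

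For part (2), since $b$ is non-principal, every $\varphi \in \irr(b)$ is nontrivial. The induction map $\varphi \mapsto \varphi^G$ is therefore defined on $\bigcup_{x \in G/T(b)} \irr(b^x)$ and takes values in irreducible characters of $G$. By Corollary 4.3, $B$ is the unique block of $G$ covering $b$ (and hence any conjugate $b^x$), so the image lies in $\irr(B)$. By Lemma 4.2 the map is surjective onto $\irr(B)$, and two characters induce to the same character exactly when they are $G$-conjugate. Since the domain has $|G/T(b)| \cdot k(b)$ elements and every $G$-orbit has size $|G/N|$, we obtain
\[
k(B) \;=\; \frac{|G/T(b)| \cdot k(b)}{|G/N|} \;=\; \frac{|N|\,k(b)}{|T(b)|}.
\]

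For part (1), $T(b_0) = G$, because $b_0$ is the unique block of $N$ containing $1_N$. Split $\irr(B_0)$ according to whether $\chi_N$ contains $1_N$ as a constituent. Those $\chi$ with $1_N \mid \chi_N$ must satisfy $\chi_N = \chi(1)\cdot 1_N$ by Clifford's theorem (since $T(1_N) = G$), hence $N \leq \ker\chi$, so these are exactly the lifts of $\irr(G/N)$; by Lemma 5.1 all of $\irr(G/N)$ sits in $\irr(B_0)$, contributing $k(G/N)$. Every other $\chi \in \irr(B_0)$ must cover some nontrivial $\varphi \in \irr(b_0)$, and the argument of (2) applied to $\irr(b_0)\setminus\{1_N\}$ (with $T(b_0)=G$) produces exactly $(k(b_0)-1)/|G/N|$ such $\chi$'s. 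Summing yields the stated formula.

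The main obstacle is making the orbit count airtight: one must carefully invoke Theorem 4.5 to get uniform orbit size $|G/N|$ on the nontrivial characters, and combine Lemma 4.2 with Corollary 4.3 to identify induction as a bijection between $G$-orbits and the relevant subset of $\irr(B)$. In the principal case one must additionally verify that the two pieces of the partition of $\irr(B_0)$ are disjoint and exhaust it, which is immediate since either $1_N \mid \chi_N$ or it does not.
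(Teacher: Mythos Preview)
Your proof is correct and follows essentially the same approach as the paper: partition $\irr(B)$ (resp.\ $\irr(B_0)$) into the characters lying over $\irr(G/N)$ and those induced from nontrivial characters of $b$ (resp.\ $b_0$), then count the latter via the uniform $G$-orbit size $|G/N|$ on $\bigcup_{x\in G/T(b)}\irr(b^x)$. Your write-up is in fact somewhat more explicit than the paper's in justifying surjectivity of induction via Lemma~4.2 and the orbit size via $T(\varphi)=N$, but the underlying argument is identical.
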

\begin{proof}

1. Note that $f_{b_{0}}=e_{b_{0}}, \Omega_{f_{b_{0}}}=\Omega_{b_{0}}$ and
$$|{\rm Irr}(f_{b_{0}})|=|{\rm Irr}(b_{0})|=k(b_{0})$$
Take any $\chi \in {\rm Irr}(B_{0})$, which belongs to Irr$(G/N)$ or is induced from Irr$(b_{0})-\{1\}$. The set Irr$(b_{0})-\{1\}$ is divided into orbits under $G$-conjugate action. Each orbit has a length $|G/N|$. Hence we have our assertion.

2. First we know $B$ is the unique block to cover $b$ by Corollary 4.3. As we proved in the Lemma 5.1, any $\chi \in {\rm Irr}(G/N)$ belongs to a block $B'$, which covers the same principal block $b_{0}$ of $N$ as principal block $B_{0}$ of $G$. Hence Irr$(B)\bigcap$Irr$(G/N)=\emptyset.$ This tells us
all irreducible characters of Irr$(B)$ are induced from Irr$(\Omega_{f_{b}})$, which has $|G/T(b)|k(b)$ elements and divided into $k(B)$ orbits of $|G/N|$ elements.
\end{proof}

\begin{description}
  \item[Defect group] For a block $B\in {\rm Bl}(G)$, we use $D(B)$ to denote the defect group of $B$.
\end{description}

For the non-principal block $b\in {\rm Bl}(N)$, if $(b,B)$ is a pair of Frobenius corresponding blocks, we have
\begin{thm}
For a pair of Frobenius corresponding blocks $(b,B)$, we have
\begin{enumerate}
   \item For a non-principal block, $D(B)\leq N$.
   \item $D(b)=_{H}D(B), |T(b):N|\not\equiv 0 \; {\rm mod} p$
  \item  For a pair of principal Frobenius corresponding block $(b,B)$, we have that Sylow $p-$subgroup $P\leq N$ and $G/N$ is a $p'$-group.
\end{enumerate}
\end{thm}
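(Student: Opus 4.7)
My plan is to prove the three parts by combining a $p$-adic valuation comparison of character degrees (for (1) and the equality in (2)), a Fong--Reynolds reduction to the inertia group (for the coprimality in (2)), and a direct $p$-group fixed-point argument via Theorem 4.6 (for (3)).

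For (1), Corollary 4.3 together with the description of $\irr(B)$ in the proof of Theorem 5.3(2) shows that every $\chi\in\irr(B)$ is of the form $\varphi^{G}$ for some $\varphi\in\irr(b^{x})$, with $\chi(1)=|G:N|\varphi(1)$. Taking $p$-adic valuations gives
\[
v_{p}\!\left(\frac{|G|}{\chi(1)}\right)=v_{p}(|G|)-v_{p}(|G:N|)-v_{p}(\varphi(1))=v_{p}\!\left(\frac{|N|}{\varphi(1)}\right),
\]
so the defects satisfy $d(\chi)=d(\varphi)$. Maximizing over $\irr(B)$ and $\irr(b)$ yields $d(B)=d(b)$ and hence $|D(B)|=|D(b)|$. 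Combined with the standard inclusion $D(b)\le D(B)$ (up to $G$-conjugation) for a covering pair, equality of orders forces $D(B)=_{G}D(b)\le N$, which gives both (1) and the defect-group equality of (2).

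For $|T(b):N|\not\equiv 0\pmod{p}$ I would pass to the inertia group via Fong--Reynolds: let $B^{*}$ be the unique block of $T(b)$ with $(B^{*})^{G}=B$ and $D(B^{*})=_{T(b)}D(B)$. For any non-trivial $\varphi\in\irr(b)$ the Frobenius-corresponding hypothesis forces $T_{G}(\varphi)=N$, so $T_{T(b)}(\varphi)=T(b)\cap N=N$ and $\varphi^{T(b)}$ is irreducible; thus $(b,B^{*})$ is itself a Frobenius corresponding pair inside $(N,T(b))$. Applying (1) to this smaller pair gives $D(B^{*})\le N$, and the classical fact that $D(B^{*})N/N$ is a Sylow $p$-subgroup of $T(b)/N$ for any block of $T(b)$ covering the $T(b)$-invariant block $b$ then forces this Sylow to be trivial, i.e.\ $|T(b):N|$ is coprime to $p$.

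Part (3) requires a different argument since (1) is restricted to non-principal blocks. I would argue directly from Theorem 4.6. Assume $p\mid|N|$ (otherwise $b_{0}=\{1_{N}\}$ and the claim is vacuous under the Frobenius hypothesis). Let $P$ be a Sylow $p$-subgroup of $G$ and $P_{N}=P\cap N$, which is a Sylow $p$-subgroup of $N$. The conjugation action of $P$ on the non-trivial $p$-group $P_{N}$ has fixed-point set $Z(P)\cap P_{N}$ of order divisible by $p$, so contains a non-trivial element $z\in N$ with $C_{G}(z)\ge P$ and $C_{N}(z)\ge P_{N}$. Then $|[z]_{N}|$ is coprime to $p$, and $\omega_{b_{0}}(\widehat{[z]_{N}})=|[z]_{N}|$ is a unit in $R$, placing $[z]_{N}\in\Omega_{b_{0}}$ for an appropriately chosen block partition. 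Theorem 4.6 then gives $C_{G}(z)\le N$; combined with $P\le C_{G}(z)$ this yields $P\le N$, so $G/N$ is a $p'$-group. The main obstacle will be the classical defect-group formula used in (2b), which must be cited carefully from \cite{naga} or its analogues.
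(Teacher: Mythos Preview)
Your arguments for (1) and (2) are correct and take a different route from the paper. For (1) the paper argues that, since every $\chi\in\irr(B)$ is induced from $N$, every irreducible $KG$-module in $B$ is relatively $N$-projective, and then invokes Green's theorem on vertices to conclude $D(B)\le N$. Your numerical defect comparison $d(\chi)=d(\varphi)$, combined with $D(b)=_{G}D(B)\cap N$, reaches the same conclusion without passing to modules and simultaneously yields the equality $D(B)=_{G}D(b)$ you need for (2). For the coprimality in (2) the paper simply appeals to Fong's lemma \cite[p.~345, Lemma~5.16]{naga}; your Fong--Reynolds reduction to the inertial group and re-application of (1) there is a legitimate alternative.

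Your argument for (3), however, has a genuine gap. You want to place $[z]_{N}$ into $\Omega_{b_{0}}$ by \emph{choosing} a block partition after the fact, and then invoke Theorem~4.6. But Theorems~4.5 and~4.6 are proved via Theorem~3.1, the block form of the Brauer permutation lemma, and that argument requires the column set $\Omega_{b_{0}}$ to be stable under the $G$-conjugation action on $N$-classes (otherwise ``$\sigma(g)$ permutes the columns'' in Lemma~1.1 is meaningless). A block partition manufactured merely so that $e_{b_{0}}\widehat{[z]_{N}}$ is part of a basis has no reason to be $G$-invariant, so Theorem~4.6 is not known to apply to it. The paper avoids this by working with one fixed partition throughout and using substantially heavier machinery: it shows $B$ is the regular cover of $b$, builds an explicit basis $\Delta$ of $e_{B}Z(FG)$, and then uses Olsson's lower-defect-group count together with Brauer/Brou\'e's enumeration of major subsections to prove that some nontrivial $N$-class of full defect already lies in the fixed $\Omega_{b_{0}}$; only then does it invoke Theorem~4.6. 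To repair your approach you would need either to exhibit a $G$-invariant block partition containing the whole $G$-orbit of $[z]_{N}$ inside $\Omega_{b_{0}}$, or to prove directly (without Theorem~4.6) that $C_{G}(z)\le N$ from the character-theoretic Frobenius hypothesis.
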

\begin{proof}

1. In Theorem 5.3, we have proved the fact: every irreducible characters of $B$ is induced from $b$ for non-principal block. Hence every irreducible module $V$ affording irreducible character $\chi$ in $B$ is $N$-projective. By Green Theorem, we have $D(B)\leq N$.

2. According to results of Fong\cite[P345, Lemma 5.16]{naga} and part 1, the part 2 is easy to know.

3. Now we consider the principal block $(b,B)$. Since $1\neq\chi\in {\rm Irr}(b)$ is induced irreducibly to $\chi^{G}\in {\rm Irr}(B)$, we know that $b^{G}=B$ is Brauer correspondence from $b$ to $B$. Hence $B$ is the regular cover of $b$.

By the block participation of Cl$(N)$, we can find  that $\{e_{b}\widehat{C_{0}}|C_{0} \in \Omega_{b}\}$ is a basis of $e_{b}Z(FN)$ over $F$.
Since $T(b)=G$ and $e_{b}=e_{B}$, we claim that
$$\{e_{B}\widehat{C}|C_{0} \in \Omega_{b}\bigcap C,C\in {\rm Cl}(G)\}$$
 is the basis of $e_{B}(FN)^{G}$. First we have
$$e_{B}(FN)^{G}\leq e_{B}(FN)^{N}=e_{B}Z(FN)=e_{b}Z(FN)$$
which implies $e_{B}(FN)^{G}$ is generated by $\{e_{B}\widehat{C}|C_{0} \in \Omega_{b}\bigcap C,C\in {\rm Cl}(G)\}$ over $F$.   Second, $\{e_{B}\widehat{C}|C_{0} \in \Omega_{b}\bigcap C,C\in {\rm Cl}(G)\}$ is linearly independent  over $F$ by the fact that $\{e_{b}\widehat{C_{0}}|C_{0} \in \Omega_{b}\}$ is a basis of $e_{b}Z(FN)$ over $F$. Our claim is proved.

Suppose we have a block participation of Cl$(G)$. Let $\{e_{B}\widehat{C}|,C\in \Omega_{B}\}$ is the basis of $e_{B}Z(FG).$ Then we have a direct sum:
$$e_{B}Z(FG)=e_{B}(FN)^{G}\bigoplus e_{B}(F(G-N))^{G} $$
The summands in the above equality have  a basis  $\{e_{B}\widehat{C}|C\in \Omega_{B}\bigcap N\}$ and  a basis $\{e_{B}\widehat{C}|C\in \Omega_{B}\bigcap (G-N)\}$, respectively.
Therefore we have a basis
$$  \Delta= \{e_{B}\widehat{C}|C_{0} \in \Omega_{b}\bigcap C,C\in {\rm Cl}(G)\} \bigcup \{e_{B}\widehat{C}|C\in \Omega_{B}\bigcap (G-N)\} $$ for $e_{B}Z(FG)$ by our previous claim.

According to \cite{ols}, the number of classes $C$ in $\Delta$ with $D(C)=D(B)$ is equal to the number of major subsections of $B$. Then by \cite{bra}\cite{broue2}, the number of major subsections of $B$ is the number of the inertial group $T(\overline{B})$-conjugate classes in $Z(D)$, where $\overline{B}$ is the root of $B$ in $DC_{G}(D),D=D(B)$. From $D>1,$ it follows that the number is bigger than $1.$ Hence we can find at least one $1\neq C$ in $\Delta$, such that $D(C)=D(B).$

Now compute the linear $F-$representation $\omega^{*}_{B}:Z(FG)\rightarrow F(\widehat{C}\rightarrow \omega(\widehat{C})^{*})$. Since $B$ is the principal block,
we have $$\omega(\widehat{C})^{*}=(\frac{|G|}{|C_{G}(x)|})^{*},x\in C$$
where $*:R\rightarrow F=R/(\pi),p\in (\pi).$ Hence we have the following:
 $$\omega(\widehat{C})^{*}=0 \Leftrightarrow D(C)<|G|_{p}=|D(B)|.$$

Take $C\in \Delta.$ If $C\in G-N$, then $\omega(\widehat{C})^{*}=0$ by the fact that $B$ is the regular cover of $b$. Therefore we can find that
$C\in \Delta, 1\neq C_{0}\in \Omega_{b}\bigcap C$ such that $\omega(\widehat{C})^{*}\neq 0.$
Hence there exist $x\in C_{0}$ such that $D(B)\leq C_{G}(x)\leq N$ by Theorem 4.6.

\end{proof}

\begin{thm}Suppose $N$ is a normal subgroup of $G$. For each prime number $p||N|$, if the $p-$ principal block $(b_{0},B_{0})$ is a pair of Frobenius corresponding block, then we have $G=NH, N\bigcap H=1.$
\end{thm}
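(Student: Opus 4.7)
The plan is to reduce the statement to a direct application of the Schur--Zassenhaus theorem, after extracting an arithmetic condition from Theorem 5.4(3).

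First I would fix a prime $p \mid |N|$ and apply Theorem 5.4(3) to the pair $(b_0, B_0)$ associated to that prime. The conclusion there gives that a Sylow $p$-subgroup $P$ of $G$ is contained in $N$, equivalently, $|G/N|$ is not divisible by $p$; that is, $G/N$ is a $p'$-group. Since the hypothesis grants this for \emph{every} prime $p \in \pi(N)$, I would conclude
\[
\pi(G/N) \cap \pi(N) = \emptyset,
\]
so $\gcd(|N|, |G/N|) = 1$.

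Next, with $N \trianglelefteq G$ and coprime orders $|N|$ and $|G:N|$, the Schur--Zassenhaus theorem furnishes a subgroup $H \leq G$ with $G = NH$ and $N \cap H = 1$, which is exactly the conclusion. I would simply cite Schur--Zassenhaus at this point (the existence half needs no appeal to the odd-order theorem since one of $N$ or $G/N$ has coprime order to the other).

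The main obstacle, if any, is ensuring that Theorem 5.4(3) is genuinely available for each relevant prime $p$, i.e., that the $(K,R,F)$ setup is being re-chosen for each prime while the hypothesis of Frobenius correspondence for the $p$-principal blocks is given uniformly. This is guaranteed by the hypothesis of the theorem, which assumes $(b_0(p), B_0(p))$ is a Frobenius corresponding pair for every $p \mid |N|$. Beyond this bookkeeping, the argument is routine.
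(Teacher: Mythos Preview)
Your proposal is correct and follows essentially the same route as the paper: apply Theorem~5.4(3) for each prime $p\mid |N|$ to conclude that $G/N$ is a $p'$-group, deduce that $N$ is a normal Hall subgroup, and then invoke Schur--Zassenhaus to obtain the complement $H$. The paper's proof is exactly this, stated in two lines.
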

\begin{proof}
From Theorem 5.4, $G/N$ is a $p'-$group for each $p||N|$. Hence $N$ is a normal Hall-subgroup of $G$. By the Schur-Zassenhaus theorem, we conclude that there exists $H\leq G$ such that $G=NH,N\bigcap H=1.$
\end{proof}
\begin{exm}
Take $G=A_{4}$, the subgroup of symmetric group $S_{4}$ on the set with $4$ elements. It has $12$ elements and has a normal subgroup $$K_{4}=\{(1),(12)(34),(13)(24),(14)(23)\}.$$ Suppose $K_{4}$ has linear irreducible characters: $1,\lambda_{1},\lambda_{2},\lambda_{3}.$ It is easy to check that $A_{4}$ has three linear irreducible characters from $A_{4}/K_{4}: 1, \delta,\delta^{2}$. Another is $\chi=\lambda_{1}^{G}=\lambda_{2}^{G}=\lambda_{3}^{G}$, induced from $K_{4}.$ Hence $A_{4}$ is a Frobenius group with $K_{4}$ as kernel.

When $p=2,$ both $A_{4}$ and $K_{4}$ have only one block, principal block: $B_{0},b_{0}.$ Then we have
\begin{equation*}
  k(B_{0})=k(G/N)+\frac{k(b_{0})-1}{|G/N|}=3+1=4
\end{equation*}

When $p=3$, the kernel $N$ has $4$ blocks:
$${ \irr }(b_{0})=\{1\},{\irr}(b_{i})=\{\lambda_{i}\},T(b_{i})=N,i=1,2,3.$$
Hence $A_{4}$ has $2$ blocks:
\[{\irr} (B_{0})=\{1,\delta,\delta^{2}\}, {\irr} (B_{1})=\{\chi=\lambda_{1}^{G} \}\]
satisfying
\[   k(B_{0})=k(G/N)=3, k(B_{1})=\frac{|N|k(b_{1})}{|T(b_{1})|}=1\]
\end{exm}
\begin{exm}Take $G=S_{4}$, the symmetric group over $4$ elements. With the same notation as Example 5.6, $A_{4}$ has one irreducible character $\chi$ of dimension $3$ and three linear characters:$1,\delta,\delta^{2}.$ Since $A_{4}$ is the commutator subgroup of $S_{4}$, there are two linear characters: $1, \rho$ in Irr$(S_{4}/A_{4})\subseteq $ Irr$(S_{4})$. Consider inertial group $T(\chi)$ in $G$, it must be equal to $S_{4}$, because of $|S_{4}/A_{4}|=2.$ If a character $ \varphi$ in $G$ covers $\chi$, it must be a extension of $\chi$ to $G$. Hence we have two extensions: $\varphi,\varphi\rho,$ such that $\varphi_{A_{4}}=\chi,(\varphi\rho)_{A_{4}}=\chi$. Considering the inertial group $T(\delta)$ in $G,$ it must be $A_{4},$ as the linear character $\rho$ can not be its extension to $G$. Hence $\delta^{G}=(\delta^{2})^{G}=\zeta$ is an irreducible character of $S_{4}$. Now we have
$${\rm Irr}(S_{4})=\{1,\rho,\zeta,\varphi,\varphi\rho,\}$$with dimension $\{1,1,2,3,3\}.$

When $p=2$, the block in $H=A_{4}$ is  covered by a unique block in $G=S_{4},$ as $G/H$ is a $2-$group. Since $H$ has only one $2$-block, the principal block $b_{0}$, the principal $b_{0}$ is only covered by the principal block $B_{0}$ in $G$. Hence $G$ has only one $2$-block, the principal block $B_{0}.$ By arguments above, the character $\chi$ in Irr$(b_{0})$ is not induced irreducibly to $B_{0}$. Hence $(b_{0},B_{0})$ is not the Frobenius corresponding block.

When $p=3, H=A_{4}$ has two blocks:  \begin{itemize}
\item $b_{0}$ with irreducible characters $1, \delta,\delta^{2}$;
 \item $b_{1}$ with irreducible characters $\chi$.
 \end{itemize}
Since $\delta^{G}=\zeta$ is irreducible in $G$, $B_{0}$ is the unique block to cover $b_{0}$. The irreducible characters in $G/H$ must belong to blocks to cover principal block $b_{0}$, because  their restriction to $H$ only has the  trivial character $1$ as their irreducible constituents \cite[Page 340]{naga}. So $\rho \in $ Irr$(B_{0})$.  Both $\varphi$ and $\varphi\rho$ are irreducible characters of dimension $3$, hence each of them constitutes a block of defect zero in $G$, denoted by $B_{1}$ and $B_{2}$, respectively. Since $\chi$ is only covered by its extension $\varphi,\varphi\rho$ in $G$, both block $B_{1}$ and $B_{2}$ are the cover block of $b_{1}$.

Therefore, $(b_{0},B_{0})$ is Frobenius corresponding block. Neither $(b_{1},B_{1})$ nor $(b_{1}, B_{2})$  is  Frobenius corresponding block.
We also have
$$k(B_{0})=k(G/H)+\frac{k(b_{0})-1}{|G/H|}=3$$
\end{exm}

\section{Frobenius corresponding block and nilpotent property}

Since Frobenius group has a nilpotent kernel, we naturally need to study the connection between Frobenius corresponding blocks and nilpotent properties. According article \cite{bz1}\cite{bz2}, we will obtain some results in this section.

Notation: $\pi(G)$ denote the set of prime number dividing $|G|$. We state the following result
\begin{prop}For $p,q\in \pi(N)$, suppose that both $(b_{0}(p),B_{0}(p))$ and $(b_{0}(q),B_{0}(q))$ are pairs of Frobenius corresponding blocks. Then
\begin{align*}
{\rm Irr}(G/N)={\rm Irr}(B_{0}(p))\bigcap {\rm Irr}(B_{0}(q))\\
\Leftrightarrow \{1\}={\rm Irr}(b_{0}(p))\bigcap {\rm Irr}(b_{0}(q))
\end{align*}
\end{prop}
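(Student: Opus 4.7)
The plan is to compute $\irr(B_{0}(p)) \cap \irr(B_{0}(q))$ explicitly in terms of $\irr(b_{0}(p)) \cap \irr(b_{0}(q))$; the proposition then drops out by a set comparison.

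First, by Lemma 5.1 we have $\irr(G/N) \subseteq \irr(B_{0}(p)) \cap \irr(B_{0}(q))$, and trivially $1_{N} \in \irr(b_{0}(p)) \cap \irr(b_{0}(q))$, so in both statements of the equivalence only one inclusion needs work. Next, using the Frobenius pair property together with the counting in Theorem 5.3(1), I would spell out the decomposition
\[
  \irr(B_{0}(p)) \;=\; \irr(G/N) \;\cup\; \{\varphi^{G} : \varphi \in \irr(b_{0}(p)) \setminus \{1\}\},
\]
where the two pieces on the right are disjoint (Frobenius reciprocity forbids a non-trivial $\varphi \in \irr(N)$ from occurring in $\psi_{N}$ for $\psi \in \irr(G/N)$), and where two characters in $\irr(b_{0}(p)) \setminus \{1\}$ induce the same irreducible of $G$ iff they are $G$-conjugate. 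The analogous decomposition holds with $p$ replaced by $q$.

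The heart of the argument is the claim: for $\varphi \in \irr(b_{0}(p)) \setminus \{1\}$, one has $\varphi^{G} \in \irr(B_{0}(q))$ if and only if $\varphi \in \irr(b_{0}(q))$. The ``if'' direction is immediate from the Frobenius property at $q$ combined with Corollary 4.3 applied to $(b_{0}(q), B_{0}(q))$. For the converse, Lemma 4.2 says $\varphi$, as a constituent of $(\varphi^{G})_{N}$, must lie in some $q$-block of $N$ covered by $B_{0}(q)$; by the Clifford-theoretic fact that the blocks of $N$ covered by a fixed block of $G$ form a single $G$-orbit, and because the principal block $b_{0}(q)$ is $G$-invariant, $b_{0}(q)$ is the only $q$-block of $N$ covered by $B_{0}(q)$, forcing $\varphi \in \irr(b_{0}(q))$.

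Combining the two decompositions with this identification yields
\[
  \irr(B_{0}(p)) \cap \irr(B_{0}(q)) \;=\; \irr(G/N) \;\cup\; \{\varphi^{G} : \varphi \in (\irr(b_{0}(p)) \cap \irr(b_{0}(q))) \setminus \{1\}\},
\]
and both directions of the biconditional are immediate from this: the right-hand side collapses to $\irr(G/N)$ precisely when the second piece is empty, i.e.\ when $\irr(b_{0}(p)) \cap \irr(b_{0}(q)) = \{1\}$. The main obstacle I anticipate is the Clifford step in the third paragraph, in particular the implicit use of ``blocks of $N$ covered by a single block of $G$ form one $G$-orbit''; this is standard but should be made explicit, and it meshes cleanly with the block form of Brauer's Lemma set up in Section 3 provided one tracks the $G$-action on $\bl(N)$ carefully.
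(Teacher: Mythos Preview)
Your proposal is correct and follows essentially the same route as the paper: both use Lemma~5.1 for the easy inclusion, both show that a nontrivial $\varphi\in\irr(b_{0}(p))\cap\irr(b_{0}(q))$ yields $\varphi^{G}\in\irr(B_{0}(p))\cap\irr(B_{0}(q))\setminus\irr(G/N)$, and for the converse both ultimately rely on the $G$-invariance of the principal block $b_{0}(q)$. The only cosmetic difference is that the paper argues directly with Clifford on characters (from $\varphi^{G}=\chi=\psi^{G}$ one gets $\varphi=\psi^{g}$, hence $\varphi\in\irr(b_{0}(q)^{g})=\irr(b_{0}(q))$), whereas you phrase the same step via the block-covering language of Lemma~4.2 and the single-orbit fact; your concern about that step is unfounded, as it is exactly the standard Clifford/covering statement and is equivalent to what the paper does.
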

\begin{proof}
First by Lemma 5.1, we know that
$$ {\rm Irr}(G/N)\subseteq {\rm Irr}(B_{0}(p))\bigcap {\rm Irr}(B_{0}(q)).$$

If there exists $1\neq \varphi \in {\rm Irr}(b_{0}(p))\bigcap {\rm Irr}(b_{0}(q))$, then we have that
$$\varphi^{G}\in {\rm Irr}(B_{0}(p))\bigcap {\rm Irr}(B_{0}(q))$$
and $\varphi^{G} \not\in {\rm Irr}(G/N).$

Conversely, if there exists a $\chi \in {\rm Irr}(B_{0}(p))\bigcap {\rm Irr}(B_{0}(q))$ and $\chi \not\in {\rm Irr}(G/N), $ then we have that $1\neq\varphi \in {\rm Irr}(b_{0}(p))$ such that $\varphi^{G}=\chi$ and $1\neq\psi \in {\rm Irr}(b_{0}(q))$ such that $\psi^{G}=\chi.$ Hence we have $\psi^{g}=\varphi, g\in G$ by Clifford Theorem. It means that
$\varphi \in {\rm Irr}(b_{0}(q))^{g}$. but ${\rm Irr}(b_{0}(q))^{g}={\rm Irr}(b_{0}(q)).$
Hence we get
$$1\neq \varphi \in {\rm Irr}(b_{0}(p))\bigcap {\rm Irr}(b_{0}(q)).$$
\end{proof}

In \cite{bz1}\cite{bz2}, authors define
$$B_{0}(G)_{\pi}=\bigcap_{p \in \pi}\textrm{Irr}(B_{0}(p)),\pi\subseteq \pi(G)$$
and denote $B_{0}(G)$ for $\pi=\pi(G).$ If $B_{0}(G)_{\pi}=\{1_{G}\}$, they say that $\textrm{Irr}(G)$ is principally $\pi$-separated. If $B_{0}(G)=\{1_{G}\}$, it is said that $\textrm{Irr}(G)$ is principally separated. According to result in \cite{bz1}\cite{bz2}, we have the following result by Proposition 6.1 above:

\begin{thm}Suppose that $(b_{0}(p),B_{0}(p))$ is a pair of Frobenius corresponding blocks for any $p\in \pi(N)$. Then $N$ is nilpotent if and only if
$$ {\rm Irr}(G/N)= {\rm Irr}(B_{0}(p))\bigcap {\rm Irr}(B_{0}(q)),p,q\in \pi(N)$$

\end{thm}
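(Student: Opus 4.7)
The plan is to reduce the character-theoretic equality on the right to the cleaner statement about principal $p$-blocks of $N$ supplied by Proposition 6.1, and then invoke the nilpotency criterion (principally separated $\iff$ nilpotent) from \cite{bz1}\cite{bz2}.

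First I would apply Proposition 6.1 to every pair of distinct primes $p,q \in \pi(N)$. The hypothesis that $(b_0(p), B_0(p))$ is a pair of Frobenius corresponding blocks for every $p \in \pi(N)$ is exactly what Proposition 6.1 requires. This reduces the stated identity to the equivalent condition
$$\{1_N\} = {\rm Irr}(b_0(p)) \cap {\rm Irr}(b_0(q)) \quad \text{for all distinct } p,q \in \pi(N).$$

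For the forward direction, I would use the decomposition $N = N_p \times N_{p'}$ available when $N$ is nilpotent, for each $p \in \pi(N)$. Standard block theory for direct products gives that the principal $p$-block of $N$ is the tensor product of the principal $p$-blocks of $N_p$ and $N_{p'}$. Since $N_{p'}$ is a $p'$-group, its principal $p$-block consists only of $1_{N_{p'}}$, while the $p$-group $N_p$ has a single $p$-block containing every irreducible character. Hence
$${\rm Irr}(b_0(p)) = \{\alpha \otimes 1_{N_{p'}} : \alpha \in {\rm Irr}(N_p)\}.$$
For distinct $p,q$, a character $\chi$ in both intersections can be written simultaneously as $\alpha \otimes 1_{N_{p'}}$ and $\beta \otimes 1_{N_{q'}}$. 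Restricting to $N_q \leq N_{p'}$ produces a multiple of $1_{N_q}$, forcing $\beta = 1_{N_q}$ and $\deg \alpha = 1$ by irreducibility of $\beta$; the symmetric restriction to $N_p \leq N_{q'}$ forces $\alpha = 1_{N_p}$, so $\chi = 1_N$.

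For the converse, pairwise triviality of the intersections upgrades automatically to $B_0(N) = \bigcap_{p \in \pi(N)} {\rm Irr}(b_0(p)) = \{1_N\}$ whenever $|\pi(N)| \geq 2$ (the smaller cases are immediate since $N$ is already a $p$-group or trivial). Thus ${\rm Irr}(N)$ is principally separated, and the characterization of nilpotency proved in \cite{bz1}\cite{bz2} yields that $N$ is nilpotent. The only non-routine step is invoking this black-box characterization; once Proposition 6.1 has been applied, the remaining block-theoretic content is standard, and the main bookkeeping obstacle is making sure the principal block of the nilpotent $N$ is correctly identified with the characters of $N/N_{p'}$.
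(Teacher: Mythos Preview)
Your proof is correct and follows essentially the same route as the paper: reduce via Proposition~6.1 to the condition $\{1_N\}=\mathrm{Irr}(b_0(p))\cap\mathrm{Irr}(b_0(q))$ for all distinct $p,q\in\pi(N)$, and then invoke the Bessenrodt--Zhang nilpotency criterion. The only difference is cosmetic: the paper quotes the pairwise form of that criterion (nilpotent $\iff$ principally $\{p,q\}$-separated for every pair) to handle both directions at once, whereas you verify the forward direction by hand via the Sylow decomposition of $N$ and use the global principally-separated criterion for the converse.
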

\begin{proof}
This is because $N$ is nilpotent if and only if ${\rm Irr}(N)$ is principally $\{p,q\}$-separated for any two different prime numbers $p,q \in \pi(N).$ Then by Proposition 6.1, we have the result.
\end{proof}
Theorem 6.2 induces the following result easily:
\begin{cor}With the same conditions as in Theorem 6.2, If $N$ is nilpotent, then
$${\rm Irr}(G/N)=\bigcap_{p\in \pi(N)}{\rm Irr}(B_{0}(p))$$
\end{cor}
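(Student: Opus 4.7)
The plan is a direct squeeze: combine the containment supplied by Lemma 5.1 with the two-prime equality provided by Theorem 6.2.

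First I would record the easy half. For each $p \in \pi(N)$ the pair $(b_{0}(p), B_{0}(p))$ is Frobenius corresponding by hypothesis, so Lemma 5.1 yields ${\rm Irr}(G/N) \subseteq {\rm Irr}(B_{0}(p))$. Intersecting over all $p \in \pi(N)$ gives
$${\rm Irr}(G/N) \subseteq \bigcap_{p \in \pi(N)} {\rm Irr}(B_{0}(p)),$$
and this direction does not even use the nilpotence of $N$ (it is really the content of Corollary 5.2).

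For the reverse inclusion I would fix any two distinct primes $p, q \in \pi(N)$. Since $N$ is nilpotent, Theorem 6.2 applies and produces the equality
$${\rm Irr}(G/N) = {\rm Irr}(B_{0}(p)) \cap {\rm Irr}(B_{0}(q)).$$
The full $\pi(N)$-indexed intersection is trivially contained in this two-term sub-intersection, so
$$\bigcap_{r \in \pi(N)} {\rm Irr}(B_{0}(r)) \;\subseteq\; {\rm Irr}(B_{0}(p)) \cap {\rm Irr}(B_{0}(q)) \;=\; {\rm Irr}(G/N),$$
which, together with the first inclusion, yields the claimed equality.

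There is essentially no obstacle at this stage: the substance has already been packaged into Theorem 6.2, which itself rests on Proposition 6.1 and the characterization of nilpotence via principal $\{p,q\}$-separation drawn from \cite{bz1}\cite{bz2}. The corollary is simply the observation that an intersection of arbitrary cardinality collapses to any of its two-term sub-intersections once the latter already coincides with the target set ${\rm Irr}(G/N)$; the intended scope is of course $|\pi(N)| \geq 2$, which is the only range in which the two-prime argument of Theorem 6.2 has content.
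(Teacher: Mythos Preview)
Your argument is correct and matches the paper's proof essentially line for line: the paper also records the chain
\[
{\rm Irr}(G/N)\subseteq \bigcap_{p\in \pi(N)}{\rm Irr}(B_{0}(p))\subseteq {\rm Irr}(B_{0}(p))\cap {\rm Irr}(B_{0}(q))
\]
for any $p,q\in\pi(N)$ and then closes it using Theorem~6.2. Your remark that the argument implicitly presumes $|\pi(N)|\geq 2$ is a fair observation; the paper's proof carries the same tacit restriction.
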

\begin{proof}
This is because
$$\textrm{Irr}(G/N)\subseteq \bigcap_{p\in \pi(N)}{\rm Irr}(B_{0}(p))\subseteq \textrm{Irr}(B_{0}(p))\bigcap \textrm{Irr}(B_{0}(q))$$
for any $p,q \in \pi(N).$
\end{proof}

 If  $\textrm{Irr}(G)$ is not principal separated, one of  questions in\cite{bz2} is: How big can $B_{0}(G)$ be?

The following result is related to their question. According to results in  \cite[X, Theorem 1.5, P416]{feit} and \cite{bz2}, we are going to prove the following result:
\begin{thm}
Let $N\unlhd G$ and $G/N$ is solvable. Suppose $(b_{0}(p),B_{0}(p))$ is a pair of Frobenius corresponding blocks for all $p \in \pi(N)$. Let $H=\prod_{p\in \pi(G/N)}O_{p'}(G)$. Then
$${\rm Irr}(G/H)=B_{0}(G).$$
In particular, ${\rm Irr}(G)$ is principally separated if and only if $G=\prod_{p\in \pi(G/N)}O_{p'}(G).$
\end{thm}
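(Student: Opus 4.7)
The plan is to push the problem down to $\bar G = G/N$, where the conclusion matches the already-known solvable case of \cite[X, Theorem 1.5]{feit} and \cite{bz2}, and then lift back to $G$ using the Frobenius corresponding hypothesis on $\pi(N)$. Theorem 5.5 first supplies a Schur--Zassenhaus decomposition $G = NK$ with $N \cap K = 1$, which forces $\gcd(|N|, |G/N|) = 1$ and $\pi(G) = \pi(N) \sqcup \pi(\bar G)$. Because $N$ is a $p'$-group for every $p \in \pi(\bar G)$, $N \leq O_{p'}(G)$, and lifting the $p'$-core through the normal $p'$-subgroup $N$ gives $O_{p'}(G)/N = O_{p'}(\bar G)$. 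Hence $N \leq H$, $\bar H := H/N = \prod_{p \in \pi(\bar G)} O_{p'}(\bar G)$, and $G/H \cong \bar G/\bar H$. Applying the solvable case of \cite{feit, bz2} to $\bar G$ gives the key input $B_0(\bar G) = \text{Irr}(\bar G/\bar H)$.

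For the inclusion $\text{Irr}(G/H) \subseteq B_0(G)$, take $\chi \in \text{Irr}(G/H)$, viewed via inflation as $\bar\chi \in B_0(\bar G)$. For $p \in \pi(N)$, Corollary 5.2 gives $\chi \in \text{Irr}(G/N) \subseteq \text{Irr}(B_0(p))$ directly. For $p \in \pi(\bar G)$, the standard block correspondence through the normal $p'$-subgroup $N$ states that a character with $N \leq \ker$ belongs to $B_0(G,p)$ iff its image belongs to $B_0(\bar G,p)$; the latter holds since $\bar\chi \in B_0(\bar G)$.

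The harder inclusion $B_0(G) \subseteq \text{Irr}(G/H)$ I would reduce, via the same block correspondence, to the intermediate claim $B_0(G) \subseteq \text{Irr}(G/N)$. To prove that claim, suppose $\chi \in B_0(G)$ with $\chi \notin \text{Irr}(G/N)$. For every $p \in \pi(N)$, the description of $\text{Irr}(B_0(G,p))$ from Theorem 5.3 forces $\chi = \varphi^G$ for some $1 \neq \varphi \in \text{Irr}(b_0(N,p))$, and Theorems 4.5--4.6 combined with the Frobenius corresponding hypothesis give $T_G(\varphi) = N$, so $\chi(1) = |G/N|\,\varphi(1)$. Fix any $q \in \pi(\bar G)$; since $\gcd(|N|, |G/N|) = 1$, $\varphi(1)$ is a $q'$-number and $|G/N|_q = |G|_q$, so $\chi(1)_q = |G|_q$. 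Thus $\chi$ has $q$-defect zero, so $\{\chi\}$ is its own $q$-block, which cannot be $B_0(G,q)$ (of positive defect since $q \mid |G|$), contradicting $\chi \in B_0(G,q)$. Hence $N \leq \ker \chi$, and the block correspondence then upgrades $\chi \in B_0(G)$ to $\bar\chi \in B_0(\bar G) = \text{Irr}(\bar G/\bar H)$, i.e.\ $\chi \in \text{Irr}(G/H)$. The ``in particular'' assertion is immediate: $\text{Irr}(G)$ is principally separated iff $B_0(G) = \{1_G\}$, which via the main equality is equivalent to $G = H$.

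The central obstacle is the defect-zero computation in the reduction $B_0(G) \subseteq \text{Irr}(G/N)$; it is the only place where the Frobenius corresponding hypothesis on $\pi(N)$ is used to constrain $B_0(G,q)$ for $q \in \pi(\bar G)$, ruling out the induced characters from $\varphi \neq 1$ on $N$. Everything else is bookkeeping with the $p'$-kernel block correspondence and the solvable input $B_0(\bar G) = \text{Irr}(\bar G/\bar H)$ from \cite{feit, bz2}.
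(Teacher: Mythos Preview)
Your proof is correct, but the paper takes a more direct route that avoids both the passage to the quotient $\bar G=G/N$ and your defect-zero argument. Since $N$ is a $p'$-group and $G/N$ is solvable for each $p\in\pi(G/N)$, the group $G$ itself is $p$-solvable, so Fong's theorem gives $\mathrm{Irr}(B_{0}(p))=\mathrm{Irr}(G/O_{p'}(G))$ \emph{in $G$}. Combined with the elementary identity $\mathrm{Irr}(G/H)=\bigcap_{p\in\pi(G/N)}\mathrm{Irr}(G/O_{p'}(G))$, this yields $\bigcap_{p\in\pi(G/N)}\mathrm{Irr}(B_{0}(p))=\mathrm{Irr}(G/H)$ immediately. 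The remaining primes are handled by the chain $\mathrm{Irr}(G/H)\subseteq\mathrm{Irr}(G/N)\subseteq\bigcap_{p\in\pi(N)}\mathrm{Irr}(B_{0}(p))$ (the first inclusion from $N\leq H$, the second from Corollary~5.2), so intersecting over $\pi(N)$ does not shrink $\mathrm{Irr}(G/H)$ and the equality $B_{0}(G)=\mathrm{Irr}(G/H)$ drops out set-theoretically. In particular, the containment $B_{0}(G)\subseteq\mathrm{Irr}(G/N)$ that you establish via the $q$-defect-zero contradiction is obtained for free in the paper's argument. Your approach trades this economy for a self-contained reduction lemma ($B_{0}(G)\subseteq\mathrm{Irr}(G/N)$), which is an interesting observation in its own right, together with the block correspondence across the normal $p'$-subgroup $N$; but it does require the extra hypothesis $\pi(G/N)\neq\emptyset$ to choose the auxiliary prime $q$, a case distinction the paper's proof does not need.
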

\begin{proof}
First we claim a general case for any $\pi\leq \pi(G)$:
$$\textrm{Irr}(G/H)=\bigcap_{p\in \pi}\textrm{Irr}(G/O_{p'}(G)).$$
Since $O_{p'}(G)\leq H,$ we have
$$\textrm{Irr}(G/H) \subseteq \bigcap_{p\in \pi}\textrm{Irr}(G/O_{p'}(G)).$$
If $\chi \in \bigcap_{p\in \pi}\textrm{Irr}(G/O_{p'}(G))$, it means that: $H\leq \textrm{ker}\chi$ and
$$\bigcap_{p\in \pi}\textrm{Irr}(G/O_{p'}(G))\subseteq \textrm{Irr}(G/H).$$ Hence our claim is proved.

If $G$ is $p-$solvable, we have
$\textrm{Irr}(G/O_{p'}(G))=\textrm{Irr}(B_{0}(p))$.

Now take $\pi=\pi(G/N)$. By arguments above and the given conditions, we have
$${\rm Irr}(G/H)=\bigcap_{p\in \pi(G/N)}\textrm{Irr}(B_{0}(p)).$$

Since $(b_{0}(p),B_{0}(p))$ is a pair of Frobenius corresponding blocks for all $p\in \pi(N)$, we know that $N\unlhd H$ by  Theorem 5.4. Hence $\textrm{Irr}(G/H)\subseteq \textrm{Irr}(G/N)$.

By corollary 5.2, we have
 $$\textrm{Irr}(G/N)\subseteq \textrm{Irr}(B_{0}(p)), p\in \pi(N).$$

Note
$$\pi(G)=\pi(N)\bigcup \pi (G/N),\pi(N)\bigcap \pi (G/N)=\emptyset$$ by Theorem 5.4.
 Then we have
\begin{align*}
B_{0}(G)&=\bigcap_{p\in \pi(G)}\textrm{Irr}(B_{0}(p))\\
& = \bigcap_{p\in \pi(G/N)}\textrm{Irr}(B_{0}(p)) \bigcap_{p\in \pi(N)}\textrm{Irr}(B_{0}(p))\\
& = \textrm{Irr}(G/H)\bigcap_{p\in \pi(N)}\textrm{Irr}(B_{0}(p))\\
& =  \textrm{Irr}(G/H)
\end{align*}
Our conclusion is proved.
\end{proof}

\bibliographystyle{amsplain}
\bibliography{xbib}
\bibliographystyle{amsplain}

\end{document}